\def\titlerunning#1{\gdef\titrun{#1}}
\def\author#1{\gdef\autrun{\def\and{\unskip, }#1}\gdef\@author{#1}}
\def\address#1{{\def\and{\\\hspace*{15.6pt}}\renewcommand{\thefootnote}{}\footnote{#1}}\markboth{\autrun}{\titrun}}
\def\email#1{email: \href{mailto:#1}{#1} }
\def\subjclass#1{\par\bigskip\noindent\textbf{Mathematics Subject Classification 2020.} #1}
\def\keywords#1{\par\smallskip\noindent\textbf{Keywords.} #1}
\newenvironment{acknowledgments}{\bigskip\small\noindent\textit{Acknowledgments.}}{\par}
\newtheorem{thm}{Theorem}[section]
\newtheorem{cor}[thm]{Corollary}
\newtheorem{lem}[thm]{Lemma}
\newtheorem{prop}[thm]{Proposition}
\theoremstyle{definition}
\newtheorem{defin}[thm]{Definition}
\newtheorem{exa}[thm]{Example}
\newtheorem*{rem}{Remark}
\numberwithin{equation}{section}
\def\Z{\mathbb{Z}}
\begin{document}

\titlerunning{Universality and undecidability in Euler and Reeb flows}

\title{\textbf{Looking at Euler flows through a contact mirror: Universality and undecidability}}

\author{Robert Cardona \and Eva Miranda \and Daniel Peralta-Salas}

\date{}

\maketitle

\address{R. Cardona: Laboratory of Geometry and Dynamical Systems, Department of Mathematics, Universitat Polit\`{e}cnica de Catalunya and BGSMath Barcelona Graduate School of
Mathematics,  Avinguda del Doctor Mara\~{n}on 44-50, 08028 , Barcelona; \email{robert.cardona@upc.edu} \and E. Miranda: Laboratory of Geometry and Dynamical Systems $\&$ Institut de Matem\`atiques de la UPC-BarcelonaTech (IMTech),  Universitat Polit\`{e}cnica de Catalunya, Avinguda del Doctor Mara\~{n}on 44-50, 08028 , Barcelona $\&$ CRM Centre de Recerca Matem\`{a}tica $\&$ IMCCE, CNRS-UMR8028, Observatoire de Paris, PSL University, Sorbonne
Universit\'{e}; \email{eva.miranda@upc.edu} \and D. Peralta-Salas: Instituto de Ciencias Matem\'aticas-ICMAT, C/ Nicol\'{a}s Cabrera, nº 13-15 Campus de Cantoblanco, Universidad Aut\'{o}noma de Madrid,
28049 Madrid, Spain; \email{dperalta@icmat.es}}


\begin{abstract}

The dynamics of an inviscid and incompressible fluid flow on a Riemannian  manifold  is  governed  by  the  Euler  equations. In recent papers \cite{CMP1, CMP2, CMPP1, CMPP2} several unknown facets of the Euler flows have been discovered, including universality properties of the stationary solutions to the Euler equations. The study of these universality features was suggested by Tao as a novel way to address the problem of global existence for Euler and Navier-Stokes \cite{TNat}. Universality of the Euler equations was proved in \cite{CMPP1} for stationary solutions using a contact mirror which reflects a Beltrami flow as a Reeb vector field. This contact mirror permits the use of advanced geometric techniques in fluid dynamics. On the other hand, motivated by Tao's approach relating Turing machines to Navier-Stokes equations, a Turing complete stationary Euler solution on a Riemannian $3$-dimensional sphere was constructed in \cite{CMPP2}. Since the Turing completeness of a vector field can be characterized in terms of the halting problem, which is known to be undecidable \cite{Turing},  a striking consequence of this fact is that a Turing complete Euler flow exhibits undecidable particle paths~\cite{CMPP2}.
In this article, we give a panoramic overview of this fascinating subject, and go one step further in investigating the undecidability of different dynamical properties of Turing complete flows. In particular, we show that variations of~\cite{CMPP2} allow us to construct a stationary Euler flow of Beltrami type (and, via the contact mirror, a Reeb vector field) for which it is undecidable to determine whether its orbits through an explicit set of points are periodic.

%
\subjclass{Primary 35Q31, 37J06; Secondary 03D78, 57R17}
\keywords{Euler equations, Reeb flows, Turing completeness, Universality}
\end{abstract}

\section{Introduction}

Back in 1936 Turing faced a fundamental question which had been driving the attention of many mathematicians since the 1920's:  \emph{Is there an answer for the decision problem for first-order logics?}. A decision problem can be posed as a \emph{yes/no} question depending on the input values.  \emph{Decidability} is the problem of the existence of an effective method, a test or automatic procedure to know whether certain premises entail certain conclusions. The halting problem is one of the first decision problems which was proved to be undecidable. Indeed, Alan Turing proved  that a general algorithm that solves the halting problem cannot exist (for all possible program-input pairs). In doing so, he, fortuitously, invented the basic model of modern digital computers, the so-called Turing machine.

The undecidability of the halting problem yields a cascade of related questions: \emph{What kind of physics might be non-computational?} (Penrose~\cite{penrose})
 \emph{Is hydrodynamics capable of performing computations?} (Moore~\cite{Mo}). Given the Hamiltonian of a quantum many-body system, does there exist an algorithm to check whether it has a spectral gap? (this is known as \emph{the spectral gap problem}, recently proved to be undecidable~\cite{perez}). And last but not least, \emph{can a mechanical system (including a fluid flow) simulate a universal Turing machine}? (Tao~\cite{T1,T2, T3}).

Surprisingly, this last question is connected with the regularity of the Navier-Stokes equations~\cite{T0},  one of the unsolved problems in the Clay's list of problems for the Millennium . In~\cite{TNat} Tao speculated on a relation between a potential blow-up of the Navier-Stokes equations, Turing completeness and fluid computation. This is part of a more general programme he launched in \cite{T0,T1,TNat} to address the global existence problem for Euler and Navier-Stokes based on the concept of \emph{universality}.  Inspired by this proposal, in \cite{CMPP1} we showed that the stationary Euler equations exhibit several universality features, in the sense that, any non-autonomous flow on a compact manifold can  be  extended  to  a  smooth  stationary  solution  of  the Euler equations on a Riemannian manifold of possibly higher dimension.  As a corollary, we established the  Turing  completeness  of  the  steady  Euler  flows  on  a  $17$-dimensional sphere \cite{CMPP1}. It is then natural to ask: can this dimensional bound be improved?

We solved this problem affirmatively in~\cite{CMPP2} constructing stationary solutions of the Euler equations on a Riemannian 3-dimensional sphere that can simulate any Turing machine (i.e., they are Turing complete). In particular, these solutions exhibit undecidable paths in the sense that there are constructible points for which it is not possible to decide whether their associated trajectories will intersect a certain (explicit) open set or not. The type of flows that we considered are Beltrami fields, a particularly relevant class of stationary solutions. Our game plan combines the computational power of symbolic dynamics with techniques from contact topology. Contact topology enters into the scene because Beltrami fields correspond to Reeb flows under a contact mirror unveiled by Sullivan, Etnyre and Ghrist more than two decades ago.
The contact mirror thus reflects a problem in Fluid Dynamics as a problem in contact geometry and back.
 %


The existence of Turing complete Euler flows gives rise to new questions concerning undecidability of different dynamical properties. One of the potential problems to consider is that of periodic orbits: Ever, at least since the work of Poincar\'e \cite{poincare}, periodic orbits are known to be one of the major tools to understand the dynamics of Hamiltonian systems. Even though, not every Hamiltonian system admits periodic orbits, the
Weinstein conjecture asserts that under some topological (compact)  and geometrical (contact) conditions on the manifold, Reeb vector fields admit at least one periodic orbit. The Weinstein conjecture is known to be true in dimension 3, so using our contact mirror we can conclude that the Turing complete Reeb flow we constructed in \cite{CMPP2} has at least one periodic orbit (in fact, in our construction the Reeb vector field coincides with a Hopf field in the complement of a certain solid torus, so it has infinitely many periodic orbits). It is then natural to ask if for every point of the sphere it is possible to decide whether its corresponding orbit will be closed or not. We shall see in this article that such a decision problem has no answer. The undecidability of other dynamical properties of Reeb flows will be also discussed. In view of G\"{o}del's incompleteness theorems, undecidability of such properties of dynamical systems seems to be an unsurmountable obstacle no matter what systems of axioms are considered.

Our goal in this article is to give an overview of this exciting area of research. Let us summarize the contents of this work. Next, in this Introduction, we present the Euler equations and the Beltrami fields on Riemannian manifolds, in Section~\ref{SS:Euler}, and the connection between contact geometry and hydrodynamics (in particular, between Beltrami fields and Reeb flows), in Section~\ref{SS:contact}. In Section~\ref{S.embedding}, following~\cite{CMPP1}, we introduce the theory of Reeb embeddings and their flexibility (in the form of a new $h$-principle), and apply it to prove several universality features of the stationary Euler flows in high dimensions. The construction of a Turing complete Reeb field on a $3$-dimensional sphere~\cite{CMPP2} is presented in Section~\ref{S.Turing}; as a novel feature, we show how variations of this result allow us to prove the existence of Reeb fields exhibiting different undecidable dynamical properties, including periodic orbits. Finally, in Section~\ref{S.future} we recall the main theorem of~\cite{CMP2} establishing the existence of Turing complete time-dependent solutions to the Euler equations (on compact Riemannian manifolds of very high dimension), and discuss the implications of our results regarding computability with the Navier-Stokes equations.



%
%

\subsection{The Euler equations on Riemannian manifolds}\label{SS:Euler}

The Euler equations describe the dynamics of an incompressible fluid flow without viscosity. Even if they are classically considered on $\mathbb{R}^3$, they can be formulated on any $n$-dimensional Riemannian manifold $(M,g)$, $n\geq 2$ (for an introduction to the geometric aspects of hydrodynamics see~\cite{AK,peralta}). The equations can be written as:
\begin{equation*}
\begin{cases}
\frac{\partial}{\partial t} X + \nabla_X X &= -\nabla p\,, \\
\operatorname{div}X=0\,,
\end{cases}
\end{equation*}
where $p$ stands for the hydrodynamic pressure and $X$ is the velocity field of the fluid (a non-autonomous vector field on $M$). Here $\nabla_X X$ denotes the covariant derivative of $X$ along $X$. A solution to the Euler equations is called stationary whenever $X$ does not depend on time, i.e., $\frac{\partial}{\partial t} X=0$, and it models a fluid flow in equilibrium.

This extension of the Euler equations to high dimensional manifolds turns out to be very useful to show that the steady and time-dependent Euler flows exhibit remarkable dynamical~\cite{CMPP1} (see also~\cite{T2, T3, TdL}), computational~\cite{CMP2} or topological~\cite{C1} universality features.

\textbf{{ A short comprehensive dictionary}:}
\begin{itemize}

\item A volume-preserving (autonomous) vector field $X$ on $M$
is {Eulerisable}~\cite{PRT} if there exists a Riemannian metric $g$ on $M$ compatible with the volume form, such that $X$ satisfies the stationary Euler equations on $(M,g)$
\begin{equation}\label{euler_stat}
\nabla_XX=-\nabla p\,, \qquad \operatorname{div}X=0
\end{equation}
for some pressure function $p$.

\item A divergence-free vector field $X$ on an odd-dimensional manifold $(M,g)$ of dimension $n=2m+1$ is Beltrami if
$$ \operatorname{curl}X=fX\,,$$
for some factor $f \in C^\infty (M)$. The curl of $X$ is defined as the unique vector field that satisfies the equation
\[
i_{\operatorname{curl}X}\mu=(dX^\flat)^m
\]
where $\mu$ is the Riemannian volume form and $^\flat$ stands for the musical isomorphism associated to the metric $g$.  The classical Hopf fields on the round sphere $\mathbb S^{2m+1}$ and the ABC flows on the flat 3-torus $\mathbb T^3$ are examples of Beltrami fields.
\end{itemize}

\subsection{Contact hydrodynamics}\label{SS:contact}

Let $M^{2m+1}$ be an odd-dimensional manifold equipped with a hyperplane distribution $\xi$. Assume that there is a one-form $\alpha \in \Omega^1(M)$ with $\ker \alpha = \xi$ and $\alpha \wedge (d\alpha)^m >0$ everywhere. Then we say that $(M^{2m+1},\xi)$ is a (cooriented) contact manifold.

The one-form $\alpha$ is called a contact form. Of course, the contact structure $\xi$ does not depend on the choice of the defining one-form $\alpha$. It is well known that $d\alpha$ induces a symplectic structure on the hyperplane distribution $\xi$ (of even dimension $2m$). The unique Reeb vector field $R$ associated to a given contact form $\alpha$ is defined by the equations
\begin{equation} \label{eq:Reeb}
\iota_R\alpha=1\,, \qquad \iota_Rd\alpha =0\,.
\end{equation}


We will now explain the connection between contact geometry and hydrodynamics. In order to understand this remarkable correspondence it is convenient to rewrite the Euler equations in a \emph{dual language}. Duality is given by contraction with the Riemannian metric $g$. With the one-form $\alpha$ defined as
$\alpha:=X^\flat$ and the Bernoulli function as $B:=p+\frac{1}{2}g(X,X)$, the steady Euler equations can be equivalently formulated as
\begin{equation*}
\begin{cases}
\iota_X d\alpha=-dB\,, \\
d\iota_X\mu=0\,,
\end{cases}
\end{equation*}
where $\mu$ is the Riemannian volume form.

Observe that:
\begin{itemize}

\item The equation $ \operatorname{curl}X=fX, \text{ with } f \in C^\infty (M) $, satisfied by a Beltrami vector field on an odd-dimensional manifold, can be equivalently written as {$ (d\alpha)^m = f \iota_X \mu\,. $} Assume that $X$ is rotational, i.e., $f>0$, then if $X$ does not vanish on $M$ we infer that

$$\alpha \wedge (d\alpha)^m= f \alpha \wedge \iota_X \mu > 0\,,$$

thus proving that $\alpha$ defines a contact structure on $M$.

\item Obviously, $X$ satisfies {$\iota_X (d\alpha)^m= f\iota_X \iota_X \mu=0$}. Therefore, since $\alpha\wedge (d\alpha)^m>0$, it is easy to conclude that $X \in \ker d\alpha$, and hence it is a reparametrization of the Reeb vector field $R$ by the function $\alpha(X)= g(X,X)$, i.e., $R=\frac{X}{\alpha(X)}$.
\end{itemize}

These observations prove one of the implications of the following theorem, which is due to Etnyre and Ghrist~\cite{EG}.

\begin{thm}\label{thm:EGcorres}
Let $M$ be a Riemannian odd-dimensional manifold. Any smooth, nonsingular rotational
Beltrami field on $M$ is a Reeb-like field for some contact form on $M$. Conversely, given a
contact form $\alpha$ on $M$ with Reeb field $X$, any nonzero rescaling of $X$ is a smooth, nonsingular
rotational Beltrami field for some Riemannian metric on $M$.
%
\end{thm}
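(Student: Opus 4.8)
The plan is to prove the two implications separately, extracting most of the work from the computations already displayed in Section~\ref{SS:contact}. For the first implication, suppose $X$ is a smooth nonsingular rotational Beltrami field on the Riemannian manifold $(M,g)$, so that $\operatorname{curl}X = fX$ with $f>0$. Set $\alpha := X^\flat$. As observed above, the Beltrami condition rewrites as $(d\alpha)^m = f\,\iota_X\mu$, and since $X$ is nowhere zero and $f>0$ we get $\alpha\wedge(d\alpha)^m = f\,\alpha\wedge\iota_X\mu = f\,g(X,X)\,\mu > 0$, so $\alpha$ is a contact form. Then $\iota_X(d\alpha)^m = f\,\iota_X\iota_X\mu = 0$ forces $X\in\ker d\alpha$ (using that $(d\alpha)^m$ is a volume form on the hyperplane field $\ker\alpha$, equivalently that $d\alpha$ is nondegenerate on that hyperplane), and since $\alpha(X) = g(X,X) > 0$ the field $X$ is a positive reparametrization of the Reeb field of $\alpha$: explicitly $R = X/\alpha(X)$. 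This is precisely the assertion that $X$ is a Reeb-like field for the contact form $\alpha$.

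For the converse, let $\alpha$ be a contact form on $M^{2m+1}$ with Reeb field $X$, and let $Y = hX$ be any nonzero rescaling, $h\in C^\infty(M)$, $h>0$. The plan is to build a metric $g$ realizing $Y$ as a rotational Beltrami field. Choose $\mu$ to be any volume form proportional to $\alpha\wedge(d\alpha)^m$; the natural normalization is $\mu := \tfrac{1}{h}\,\alpha\wedge(d\alpha)^m$, so that $\iota_Y\mu = \alpha\wedge(d\alpha)^m \cdot \tfrac{1}{h}\cdot \iota_X(\ \cdot\ )$—more carefully, since $\iota_X\alpha = 1$ and $\iota_X d\alpha = 0$, one computes $\iota_X(\alpha\wedge(d\alpha)^m) = (d\alpha)^m$, hence $\iota_Y\mu = \iota_{hX}(\tfrac1h\alpha\wedge(d\alpha)^m) = (d\alpha)^m$. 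Thus the Beltrami-type identity $(d\alpha)^m = f\,\iota_Y\mu$ holds with $f\equiv 1 > 0$ at the level of forms; it remains to choose a Riemannian metric $g$ compatible with $\mu$ (i.e. with Riemannian volume $\mu$) for which $Y^\flat = \alpha$ and for which the musical/curl identities match, so that $f=1$ genuinely means $\operatorname{curl}Y = Y$ and $\operatorname{div}_g Y = 0$.

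The construction of $g$ is the step requiring care, and I expect it to be the main obstacle. The idea is to prescribe $g$ on the splitting $TM = \langle X\rangle \oplus \ker\alpha$: declare this sum orthogonal, set $g(X,X) = 1/h$ along the $X$-direction (so that $Y^\flat = hX^\flat_{\text{new}}$ has $g(Y,\ \cdot\ ) = \alpha$, matching $\alpha = Y^\flat$), and on $\ker\alpha$ choose any metric compatible with the symplectic form $d\alpha|_{\ker\alpha}$ so that the induced volume on $\ker\alpha$, wedged with $\alpha$, reproduces $\mu$. One then checks that with this metric $\operatorname{div}_g Y = 0$ (equivalent to $d\iota_Y\mu = d(d\alpha)^m = 0$, which is automatic) and that $\operatorname{curl}Y = Y$, i.e. $i_{\operatorname{curl}Y}\mu = (dY^\flat)^m = (d\alpha)^m = \iota_Y\mu$. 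The delicate points are verifying that the pointwise linear-algebra choices on $\ker\alpha$ can be made smoothly and globally (a standard contractibility-of-compatible-metrics argument), and confirming that the curl computed from this $g$ indeed returns $Y$ with factor $f=1$ rather than some other positive function—this is where one must track the normalization of $\mu$ against $dY^\flat$ precisely. Once these are in place, $Y$ is a smooth nonsingular rotational ($f=1>0$) Beltrami field for $g$, completing the proof.
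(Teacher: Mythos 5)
Your proposal is correct, and the two directions deserve slightly different comments. For the forward implication (nonsingular rotational Beltrami field $\Rightarrow$ Reeb-like field), your argument reproduces exactly the two bullet-point observations the paper gives just before the theorem statement; indeed the paper explicitly says those observations ``prove one of the implications,'' and you have filled in no new content there beyond citing that $(d\alpha)^m$ is nondegenerate on $\ker\alpha$, which is standard. For the converse implication the paper does \emph{not} give a proof at all---it cites Etnyre--Ghrist for the $3$-dimensional case and \cite{CMPP1} for the higher odd-dimensional generalization---so here your proposal is genuine independent work, and it follows the standard construction from that literature: declare $\langle X\rangle\perp\ker\alpha$, fix $g(X,X)=1/h$ so that $Y^\flat=\alpha$, and then choose the metric on $\ker\alpha$ so that the induced Riemannian volume form equals $\tfrac{1}{h}\,\alpha\wedge(d\alpha)^m$. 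The one place where you are a bit loose, and which you correctly flag, is the normalization on $\ker\alpha$: with $g(X,X)=1/h$ the Riemannian volume form is $\tfrac{1}{\sqrt{h}}\,\alpha\wedge\omega_\xi$ (where $\omega_\xi$ is the $g$-volume of $\ker\alpha$), so to hit $\mu=\tfrac{1}{h}\alpha\wedge(d\alpha)^m$ one needs $\omega_\xi=\tfrac{1}{\sqrt{h}}(d\alpha)^m|_{\ker\alpha}$, which is achieved by conformally rescaling any $d\alpha$-compatible metric on $\ker\alpha$; once that is in place the check $(dY^\flat)^m=(d\alpha)^m=\iota_Y\mu$ gives $\operatorname{curl}Y=Y$ and $d\iota_Y\mu=0$ gives $\operatorname{div}Y=0$. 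You should also note that ``nonzero rescaling'' is implicitly taken with $h>0$; if $h<0$ one works with $-\alpha$ instead. With those normalizations pinned down, the proposal is a complete and correct proof of both directions, going somewhat beyond the paper, which only sketches the forward direction and cites references for the converse.
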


\begin{rem}
The original proof by Etnyre and Ghrist is for three dimensional manifolds. The fact that the correspondence holds on any odd-dimensional manifold was detailed in~\cite{CMPP1}. See also~\cite{CMP1} for an extension of this result to $b$-manifolds.
\end{rem}
%
%
%
%
%
%
%
%

\section{Embedding dynamics into Reeb flows}\label{S.embedding}

In~\cite{CMPP1}, we studied several universality features of the stationary Euler equations. In view of the correspondence established in Theorem~\ref{thm:EGcorres}, we can reformulate the question of embedding dynamics into steady Euler flows in terms of Reeb flows. Let us fix a nonvanishing vector field $X$ on a compact manifold $N$ and some compact contact manifold $(M,\xi)$ of dimensions $n\leq m$, respectively. The question we answer in this section is the following: Can we give sufficient conditions for the existence of an embedding $e:N\hookrightarrow M$ and a contact form $\alpha \in \Omega^1(M)$ defining $\xi$ such that the Reeb field $R$ satisfies $e_*X=R|_{e(N)}$?

\subsection{Flexibility of Reeb embeddings}
We will address the question above using a classical framework for flexibility problems in contact geometry: the homotopy principle. The world of contact geometry exhibits a lot of flexibility which reduces geometrical problems to their associated purely homotopical algebraic problems. The pioneering work of Gromov~\cite{G} showed that this approach is extremely fruitful for symplectic and contact geometrical problems. Some of Gromov's results in contact geometry were generalized in~\cite{BEM} when the ambient manifold is closed and the contact structure is ``overtwisted". We will not introduce this notion here, the only thing that we need in our discussion is that being ``overtwisted" is a property that a given contact structure may satisfy. \newline

A first observation concerning our motivating question of embedding dynamics on Reeb fields is that the vector field $X$ cannot be arbitrary.

\begin{defin}
A vector field $X$ on $N$ is geodesible if there is some metric for which the orbits of $X$ are geodesics.
\end{defin}
When $X$ is of unit length for such a metric, we say that $X$ is geodesible of unit length. From now on, by geodesible we mean geodesible of unit length. A characterization of geodesible vector fields was given by Gluck in terms of differential forms: $X$ is geodesible if and only if there is some one-form $\beta$ such that $\beta(X)=1$ and $\iota_Xd\beta=0$. In particular, if a Reeb vector field $R$ defined by a form $\alpha$ on a contact manifold $M$ has some invariant submanifold $N$, then $R$ restricted to $N$ is geodesible. Indeed, if $X$ is the vector field $R$ restricted on $N$ and $i:N\hookrightarrow M$ is the inclusion of $N$ into $M$, then $i^*\alpha$ satisfies
\begin{equation}
\begin{cases}
i^*\alpha(X)=1 \\
\iota_X di^*\alpha=0
\end{cases}.
\end{equation} Note that $i^*\alpha$ is not necessarily a contact form, so that $X$ is not necessarily a Reeb field (in general, it is not even volume-preserving). However, it is always geodesible according to Gluck's characterization. \newline

Conversely, start with any geodesible (hence non-vanishing) vector field $X$ on a compact manifold $N$.
\begin{defin} \label{def:ReebEmb}
An embedding $e:(N,X) \hookrightarrow (M,\xi)$ is called a \emph{Reeb embedding} if there is a contact form $\alpha$ defining $\xi$ such that the associated Reeb field satisfies $e_*X=R|_{e(N)}$.
\end{defin}

The main theorem in \cite{CMPP1} gives sufficient conditions in terms of the codimension of an arbitrary smooth embedding to be isotopic to a Reeb embedding.
\begin{thm}[\cite{CMPP1}]\label{thm:flexReeb}
Let $e: (N,X) \hookrightarrow (M,\xi)$ be a smooth embedding of $N$ into a contact manifold $(M,\xi)$ where $X$ is a geodesible vector field on $N$. Assume that $\dim M \geq 3n+2$. Then $e$ is isotopic to a $C^0$-close Reeb embedding $\tilde e:(N,X)\hookrightarrow (M,\xi)$.
\end{thm}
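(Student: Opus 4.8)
The plan is to recast the equality $e_*X=R|_{e(N)}$ as a flexibility problem and solve it with an h-principle. First I would unpack what a Reeb embedding actually requires. Fix a one-form $\beta$ on $N$ as in Gluck's characterization, so that $\beta(X)=1$ and $\iota_Xd\beta=0$. Then asking for a contact form $\alpha$ defining $\xi$ with $e_*X=R|_{e(N)}$ is the same as prescribing, along $e(N)$, a germ of contact form $\alpha$ with $e^*\alpha=\beta$, with $\ker\alpha=\xi$, with $\alpha(e_*X)=1$, and --- crucially --- with $e_*X$ lying in the radical of $d\alpha$ regarded as a skew form on the \emph{full} ambient space $TM|_{e(N)}$, not just on $TN$. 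Once such a germ is built, a soft neighbourhood-type argument extends it to an honest contact form on a tubular neighbourhood of $e(N)$, and restricting the corresponding Reeb field to $e(N)$ recovers $e_*X$. Thus the problem decomposes into (i) producing the formal data along $e(N)$ and (ii) realising it by a genuine embedding isotopic and $C^0$-close to $e$.

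For step (i), the formal data amounts to a $2$-form $\omega$ on $TM|_{e(N)}$ restricting to $d\beta$ on $TN$, having $e_*X$ in its radical, and nondegenerate on the hyperplane $\xi|_{e(N)}$ of dimension $2m$. Since $e_*X$ is transverse to $\xi$, the bundle $TN$ contributes only the $(n-1)$-plane $\ker\beta$ to $\xi|_{e(N)}$; on it $d\beta$ is a skew form whose radical (beyond the $X$-direction) has dimension at most $n-1$. Because the normal bundle has rank $\dim M-n\geq 2n+2$, there is ample room to pair these degenerate directions with normal ones and to fill out a symplectic form on the remaining even-dimensional complement. Hence a formal Reeb embedding extending $e$ exists, and in this codimension range it is unique up to homotopy; there is no genuine topological obstruction at this stage.

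Step (ii) is the heart of the matter and is where the hypothesis $\dim M\geq 3n+2$ is consumed. The Reeb condition is not one of the textbook contact differential relations --- Reeb orbits are transverse, not tangent, to $\xi$ --- so the real work is to repackage the geodesibility datum $\beta$ together with the normal data into an auxiliary embedding problem that a Gromov-type h-principle does cover, namely an isotropic (or isocontact) embedding inside the symplectic bundle $(\xi,d\alpha)$. Its flexible range demands codimension beyond roughly $2n$, i.e.\ $2m\geq 3n+1$, which is exactly what $\dim M\geq 3n+2$ provides (with one unit to spare for turning immersions into embeddings and for the $1$-parametric statement yielding the isotopy). Invoking Gromov's h-principle for isotropic embeddings in large codimension --- or, should overtwistedness be needed, first passing to a Darboux chart in which an overtwisted local model is available and applying~\cite{BEM} --- converts the formal solution of (i) into a holonomic one; the underlying homotopy of embeddings can be kept $C^0$-small, which produces the desired isotopy to a Reeb embedding $\tilde e$ and completes the proof.

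The step I expect to be the main obstacle is precisely this encoding of a Reeb embedding as an isotropic/isocontact one: it is genuine geometric work to trade the transverse Reeb direction for tangency data on a thickened model and to check that the codimension bookkeeping closes up exactly at $\dim M=3n+2$. A secondary point requiring care is that the contact structure $\xi$ is given rather than chosen, so one must arrange enough local flexibility near $e(N)$ to run the argument; this is handled by working inside a small Darboux ball, where the needed local contact model can be installed without disturbing $\xi$ elsewhere.
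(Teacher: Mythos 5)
Your proposal captures the right high-level strategy (set up a formal version of the Reeb-embedding condition, then resolve it by an $h$-principle), and it correctly isolates the codimension count and the role of $\beta$ from Gluck's criterion. But there are two places where the argument as written would not close.

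First, the passage from pointwise to global in your step (i) is not a matter of "ample room." You observe that over a single point the normal bundle is large enough to absorb the degenerate directions of $d\beta|_{\ker\beta}$, and you conclude that a formal solution exists and is "unique up to homotopy; there is no genuine topological obstruction." That conclusion is precisely the theorem-grade content of this step. In the paper's proof it is established by reducing the formal iso-Reeb condition to the problem of homotoping $\ker\beta$ into an isotropic subbundle of $\xi$, and then running obstruction theory on the bundle over $N$ with fiber the path space $P=\operatorname{Path}\bigl(\operatorname{Grass}(n-1,\mathbb{R}^{2m}),\operatorname{Grass}_{\operatorname{is}}(n-1,\mathbb{R}^{2m})\bigr)$; using the identifications of these Grassmannians as homogeneous spaces $SO(2m)/\bigl(SO(n-1)\times SO(2m-n+1)\bigr)$ and $U(m)/\bigl(SO(n-1)\times U(m-n+1)\bigr)$, one computes $\pi_j(P)=0$ for $j\leq n-1$ exactly when $2m\geq 3n-1$. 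This Grassmannian calculation is where the bound actually comes from, and a fiberwise dimension count does not substitute for it.

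Second, your handling of the passage from formal to genuine has two problems. The paper does not invoke an $h$-principle for isotropic embeddings of $N$; instead it thickens $e(N)$ to an open codimension-zero neighborhood $U'$ equipped with an auxiliary contact structure $(\xi_2,\omega_2)$ carrying the formal data, so that the original problem becomes a formal isocontact embedding $\hat e:U'\to(M,\xi)$ of an open manifold in codimension zero, to which \cite[Corollary~1.4]{BEM} applies. Your phrase "trade the transverse Reeb direction for tangency data on a thickened model" gestures at this, but the actual reduction — and the use of Inaba's extension lemma to convert the resulting iso-Reeb embedding into a globally defined Reeb field — is what makes the encoding work and is missing here. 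More seriously, your fallback for the non-overtwisted case, namely "passing to a Darboux chart in which an overtwisted local model is available," does not work: a Darboux chart of a given $\xi$ is the standard local model, and you cannot install an overtwisted structure there without changing $\xi$, which would destroy the conclusion that the Reeb embedding lands in $(M,\xi)$. In the paper, $\dim M\geq 3n$ suffices only under the standing assumption that $(M,\xi)$ is overtwisted; the extra $+2$ in the hypothesis is what pays for the general (possibly tight) target and for the $C^0$-closeness of $\tilde e$, not for "turning immersions into embeddings."
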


\begin{rem}
If we impose the additional assumption that $(M,\xi)$ is an overtwisted contact manifold, then $\dim M \geq 3n$ is enough, although the Reeb embedding $\tilde e$ is not necessarily $C^0$ close to $e$ if $\dim M <3n+2$. In \cite{CMPP1}, parametric versions of the previous statement are also discussed.
\end{rem}

\begin{exa}
The existence of a Reeb embedding of any pair $(N,X)$ into some contact manifold is easy to establish, since there is a natural source of examples of such embeddings. Denote by $\beta$ the one-form such that $\beta(X)=1$ and $\iota_Xd\beta=0$. Gluck's characterization implies that there is a metric for which $X$ is of unit-length and its orbits are geodesics which satisfies $g(X,\cdot)=\beta$. Recall that the cotangent bundle $T^*N$ is equipped with the canonical Liouville one-form $\lambda_{std} \in \Omega^1(T^*N)$. Such one-form is characterized by the property that, given any one-form $\gamma$ on $N$, which can be understood as an embedding $\gamma:N\rightarrow T^*N$, we have $\gamma= \gamma^*\lambda_{std}$. For a given metric one can define the unit tangent bundle $STN$ defined fiberwise by $ST_pN=\{X\in T_pN \enspace | \enspace g_p(X,X)=1\}$. A standard property (see e.g.~\cite[Section 1.5]{Geig}) of $\lambda_{std}$ is that given the metric $g$ on $N$, it restricts on $ST^*N$ (the unit cotangent bundle) as a contact form $\lambda$ whose Reeb field is dual to the geodesic vector field on $STN$. In particular, the section $\beta$, seen as an embedding
$$ \beta: N \rightarrow ST^*N $$
satisfies $\beta^*\lambda=\beta$ and actually the Reeb field $R$ defined by $\lambda$ satisfies $\beta_* X=R$. Thus, it is a Reeb-embedding according to Definition \ref{def:ReebEmb}. This further motivates a systematic examination of Reeb-embeddings from a contact topology point of view, a study that leads to Theorem \ref{thm:flexReeb}.
\end{exa}

\begin{proof}[Sketch of the proof of Theorem \ref{thm:flexReeb}]

The proof of Theorem \ref{thm:flexReeb} follows the usual procedure of $h$-principle type results. We first define a ``formal" notion of Reeb embedding, which satisfies a property that is purely homotopic in terms of its differential. We then prove that, under certain conditions, any formal Reeb embedding is isotopic to a genuine Reeb embedding (i.e., they satisfy the $h$-principle). To conclude, we use obstruction theory to analyze the minimal codimension for which any smooth embedding is a formal Reeb embedding satisfying the conditions for the $h$-principle to apply. We will now sketch each of these steps of the proof, under the simplifying assumption that $M$ is overtwisted.

\paragraph{\textbf{Step 1: Iso-Reeb embeddings and extension lemma.}}

Let $X$ be a geodesible vector field on $N$, and denote by $\beta$ a one-form such that $\beta(X)=1$ and $\iota_Xd\beta=0$. We need to fix such a choice of one-form, and let $\eta:=\ker \beta$. Let $(M,\xi)$ be an overtwisted contact manifold with defining contact form $\alpha$, i.e., $\ker \alpha=\xi$.

With a slight abuse of notation, we will denote $\alpha \circ F_1$ for $\alpha(F_1(\cdot))$ and $d\alpha \circ F_1$ for $d\alpha(F_1(\cdot),F_1(\cdot))$. This is also denoted by ${F_1}^*\alpha$ and ${F_1}^*d\alpha$ in the discussion of ``generalized iso-contact immersions" in~\cite[Section 16.2]{hprinc}.

\begin{defin}\label{iso-Reeb}
An embedding $f:(N,X,\eta=\ker \beta)\rightarrow (M,\xi)$ is an \emph{iso-Reeb} embedding if $f_*(\eta)=\xi$.
\end{defin}

The corresponding formal notion is

\begin{defin}\label{fR} An embedding $f:(N,X,\eta)\rightarrow (M,\xi)$ is a \emph{formal iso-Reeb} embedding if there exists a homotopy of monomorphisms
$$ F_t:TN \longrightarrow TM,  $$
such that $F_t$ covers\footnote{We say that $F_t:TN \rightarrow TM$ covers $f:N\rightarrow M$ if the map between bases induced by $F_t$ is constantly equal to $f$.} $f$, $F_0=df$, $h_1 \alpha \circ F_1= \beta$ and $d\beta|_{\eta}= h_2 d\alpha \circ F_1|_{\eta} $ for some strictly positive functions $h_1$ and $h_2$ on $N$.
\end{defin}
Any (genuine) iso-Reeb embedding is clearly a formal iso-Reeb embedding, with $F_t$ constanly equal to $df$. Both conditions $h_1 \alpha \circ F_1= \beta$ and $d\beta|_{\eta}= h_2 d\alpha \circ F_1|_{\eta} $ have to be imposed, since $F_1$ does not commute with the exterior derivative in general (when $F_1$ is not holonomic). This formal notion of Reeb embedding is enough to obtain the main theorem for an overtwisted target contact manifold. For the most general case, an extra formal hypothesis needs to be imposed (confer \cite{CMPP1}). \newline

The following lemma by Inaba \cite{In} (see also~\cite{CMPP1}) provides the desired property of iso-Reeb embeddings.

\begin{lem}
Let $N$ be a submanifold of $(M,\xi)$, and denote by $i$ the inclusion map of $N$ into $M$. Let $\eta$ be the restriction $i^*\xi$. A nonvanishing vector field $X$ on $N$ can be extended to a Reeb field on all $M$ if and only if $X$ is transverse to $\eta$ and the flow of $X$ preserves $\eta$.
\end{lem}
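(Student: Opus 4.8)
The plan is to unwind the phrase ``$X$ extends to a Reeb field'' into two pointwise linear conditions and then build a contact form realising them. Concretely: $X$ extends to a Reeb field on $M$ if and only if there is a contact form $\alpha$ with $\ker\alpha=\xi$ such that, \emph{along} $N$, one has $\alpha(X)=1$ and $\iota_Xd\alpha=0$ (the contraction taken in $TM$). Indeed, at each $p\in N$ the kernel of $d\alpha_p$ on $T_pM$ is the line $\mathbb{R}R_p$ and $\alpha_p(R_p)=1$, so these two conditions single out $X_p=R_p$; the converse is immediate. So the whole statement reduces to an existence question for such an $\alpha$. Note also that $X_p\in\eta_p=T_pN\cap\xi_p$ is equivalent to $X_p\in\xi_p$ (since $X_p\in T_pN$ always), so ``$X$ transverse to $\eta$'' just says ``$X$ transverse to $\xi$ along $N$''; under this hypothesis $\eta$ has corank $1$ in $TN$.

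For the ``only if'' direction I would simply pull back. If such an $\alpha$ exists, then $\beta:=i^*\alpha$ is a one-form on $N$ with $\beta(X)=i^*\alpha(X)=1\neq0$, so $X$ is transverse to $\eta=\ker\beta$; and since $\mathcal{L}_R\alpha=\iota_Rd\alpha+d\iota_R\alpha=0$ and $X$ is $i$-related to $R$ (because $R$ is tangent to $N$ along $N$ and there equals $X$, so $N$ is $R$-invariant), we get $\mathcal{L}_X\beta=i^*(\mathcal{L}_R\alpha)=0$. Hence the flow of $X$ preserves $\beta$ and in particular $\eta$.

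For the ``if'' direction I would construct $\alpha$ in two stages from an arbitrary defining one-form $\alpha_0$ of $\xi$. First, transversality makes $\alpha_0(X)$ nowhere zero on $N$; assuming (coorientation convention) it is positive, choose a positive $h_1\in C^\infty(M)$ with $h_1|_N=1/(\alpha_0(X)|_N)$ and set $\alpha_1:=h_1\alpha_0$, so $\ker\alpha_1=\xi$ and $\alpha_1(X)=1$ along $N$. Now $\beta_1:=i^*\alpha_1$ has $\beta_1(X)=1$ and $\ker\beta_1=\eta$; since the flow of $X$ preserves $\eta$ we have $\mathcal{L}_X\beta_1=g\,\beta_1$ for some $g\in C^\infty(N)$, and pairing with $X$ (using $\beta_1(X)\equiv1$) forces $g\equiv0$, whence $\iota_Xd\beta_1=\mathcal{L}_X\beta_1=0$; equivalently $d\alpha_1(X,w)=0$ for all $w\in T_pN$, $p\in N$. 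Second, seek the final form as $\alpha:=h\,\alpha_1$ with $h>0$ and $h|_N=1$ (which keeps $\ker\alpha=\xi$ and $\alpha(X)=1$ along $N$); a short computation gives, for $p\in N$ and $v\in\xi_p$, $\iota_Xd\alpha_p(v)=d\alpha_1(X,v)-dh_p(v)$. So it is enough to produce $h$ whose value along $N$ is $1$ and whose differential along $N$ is prescribed by $dh_p|_{T_pN}=0$ and $dh_p(v)=d\alpha_1(X,v)$ for $v\in\xi_p$. These prescriptions are consistent on $\eta_p=T_pN\cap\xi_p$ precisely because $\iota_Xd\beta_1=0$, and a dimension count gives $T_pN+\xi_p=T_pM$, so together they determine a well-defined $1$-jet of $h$ along $N$; extending it to a global positive function via a tubular neighbourhood of $N$ and a cut-off yields $\alpha=h\alpha_1$, whose Reeb field restricts to $X$ on $N$ by the first paragraph.

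The Cartan-calculus identities and the jet extension are routine. The step I expect to carry the real content is the normal-direction correction in the ``if'' direction: one must verify that the prescribed first jet of $h$ along $N$ is consistent, and this is exactly where the hypothesis ``the flow of $X$ preserves $\eta$'' is spent — it is what forces $\iota_Xd\beta_1=0$ — together with the transversality fact $T_pN+\xi_p=T_pM$. A minor additional point to be handled carefully is the sign of $\alpha_0(X)$, i.e.\ the coorientation convention under which ``transverse to $\eta$'' is to be read.
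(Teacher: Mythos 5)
The paper does not prove this lemma; it states it with a citation to Inaba~\cite{In} (see also~\cite{CMPP1}) and only records the equivalent Gluck-type reformulation, that $X$ is transverse to $\eta$ and preserves it iff there is a one-form $\beta$ on $N$ with $\beta(X)=1$, $\iota_X d\beta=0$ and $\ker\beta=\eta$. So there is no in-text argument to match yours against; judged on its own, your proof is correct and your route is the natural one.

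Your reduction to a $1$-jet extension problem for the conformal factor $h$ is exactly right, and you spend each hypothesis where it must be spent: transversality gives $T_pN+\xi_p=T_pM$, and invariance of $\eta$ together with $\beta_1(X)\equiv1$ kills the conformal function $g$ in $\mathcal L_X\beta_1=g\beta_1$, yielding $\iota_X d\beta_1=0$, which is precisely the compatibility of the two prescriptions $dh|_{TN}=0$ and $dh|_{\xi}= \iota_X d\alpha_1|_{\xi}$ on their overlap $\eta=TN\cap\xi$. The computations $(\mathcal L_X\beta_1)(X)=0$ and $\iota_X d\alpha(v)=d\alpha_1(X,v)-dh(v)$ for $v\in\xi_p$ along $N$ are both correct. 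Two small points worth making explicit in a polished write-up: the prescribed covector field $p\mapsto dh_p$ along $N$ should be checked to be a smooth section of $T^*M|_N$ (pick a smooth complement of $\eta$ in $\xi|_N$, which exists since $T_pN+\xi_p=T_pM$, and define $dh_p$ on $TN$ and on that complement); and the coorientation/sign issue you flag at the end is the only place a convention must be fixed, since for a cooriented $\xi$ one needs $\alpha_0(X)>0$ rather than merely $\neq0$.
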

An iso-Reeb embedding $f$ is in particular a Reeb embedding according to Definition \ref{def:ReebEmb}. The vector field $X$ is transverse to $\eta$ and preserves it if and only if there is a one form $\beta$ such that $\beta(X)=1$, $\iota_Xd\beta=0$ and $\ker \beta=\eta$. These are our hypotheses in the case of an iso-Reeb embedding, hence by the previous lemma there is a contact form whose Reeb field $R$ satisfies $f_*X=R$.

\paragraph{\textbf{Step 2: An $h$-principle via isocontact embeddings.}}

Our goal in this second step is to prove that any formal iso-Reeb embedding $e:(N,X,\eta) \rightarrow (M,\xi)$ into an overtwisted contact manifold, is homotopic through formal iso-Reeb embeddings to a genuine iso-Reeb embedding. This is tantamount to saying that iso-Reeb embeddings satisfy an existence $h$-principle. Other versions of the $h$-principle (parametric, relative to the domain, etc...) are discussed in \cite{CMPP1}. Recall that $\alpha$ is a defining contact form of $\xi$. The sketch of the argument is the following:
\begin{enumerate}
\item The embedding $e$ satisfies that $de(\eta)\subset TM|_N$, but $de(\eta)$ is not, in general, contained in $\ker \alpha=\xi$. We extend the homotopy $F_t$ and use it inversely to deform $\xi$ via an homotopy of symplectic vector bundles $(\xi_t,\omega_t)$ (defined over all $M$, but which is identically $(\xi,d\alpha)$ outside a neighborhood $U$ of $e(N)$) such that $(\xi_0,\omega_0)=(\xi,d\alpha)$, $(\xi_1,\omega_1)$ satisfies $de(\eta)\subset \xi_1$ and $\omega_1|_{\eta}=d\beta$ along $N$. The last condition is guaranteed, up to conformal transformation, by the formal iso-Reeb condition.
\item Using partitions of unity, the fact that $\omega_1$ is non degenerate on $\xi_1$, and that $\omega_1|_{\eta}=d\beta$,  it is now possible to make another deformation. We extend the homotopy $(\xi_t,\omega_t)$ to $t\in [1,2]$ such that $(\xi_2,\omega_2)$ is a contact structure in a smaller neighborhood $U'$ of $e(N)$ and still satisfies $de(\eta)\subset \xi_2$. In particular, we can achieve that $\omega_2=d\gamma$ for some one form $\gamma$ such that $\gamma$ satisfies $e^*\gamma=\beta$ (the form such that $\ker \beta=\eta$ and $\beta(X)=1$). The pair $(\xi_2,\omega_2)$ will not be a contact structure globally, since this small neighboorhod is a priori smaller than the neighborhood $U$ where $(\xi_1,\omega_1)$ was not anymore of contact type. Hence in some parts $U\setminus U'$, $\xi_2$ is not of contact type.
\item We will now reduce to a formal isocontact embedding (confer \cite[Section 12.3]{hprinc} for more details on such embeddings). We endow the neighborhood $U'$ with the contact structure $(\xi_2, \omega_2)$. We use the previous deformations $(\xi_t,\omega_t), t\in [0,2]$ defined on $U'$ to endow the trivial embedding $\hat e: U' \rightarrow M$ (defined as a neighborhood extension of the embedding $e$) with an homotopy of monomorphisms $G_t:TU' \rightarrow TM$ such that $G_0=d\hat e$, $G_1$ satisfies $\xi_2=G_1^{-1}(\xi)$ and the map induces a conformally symplectic map.
\item The map $\hat e$ is a formal isocontact embedding of codimension $0$ with open source manifold. The $h$-principle for such embeddings into overtwisted targets applies \cite[Corollary 1.4]{BEM}. We obtain an embedding $\tilde e: U'\rightarrow M$ (isotopic to $\hat e$ through formal isocontact embeddings) such that $d\tilde e$ satisfies $d\tilde e(\xi_2)=\xi$ and the map induces a conformally symplectic map. Since $(\xi_2,\omega_2)$ restricted to $N\subset U'$ corresponds to $(\eta,d\beta)$, we deduce that $\tilde e|_N$ is a genuine iso-Reeb embedding isotopic to $e=\hat e|_N$.
\end{enumerate}

\paragraph{\textbf{Step 3: Obstruction theory.}}

The final step of the proof consists in showing that for $\dim M\geq 3\dim N$, any smooth embedding $e:N\rightarrow (M,\xi)$ is a formal iso-Reeb embedding for any choice of $(X,\beta)$ where $X$ is a non-vanishing geodesible field and $\beta$ is a choice of one-form for which $\beta(X)=1$ and $\iota_Xd\beta=0$. We will assume the following lemma, confer \cite{CMPP1} for the details.
\begin{lem}\label{lem:symp}
Let $e:(N,X,\eta)\rightarrow (M,\xi)$ be an embedding such that there is an homotopy of monomorphisms $F_t:TN\rightarrow TM$ covering $e$ satisfying $F_0=de$ and $F_1(\eta)$ is an isotropic subbundle of $\xi$. Then $e$ is a formal iso-Reeb embedding.
\end{lem}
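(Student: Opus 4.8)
The plan is to build, by concatenating homotopies, a new family of monomorphisms $G_t:TN\to TM$ covering $e$ with $G_0=de$ and $G_1$ satisfying the two conditions in Definition~\ref{fR}. Since the hypothesis already provides a path of monomorphisms from $de$ to a monomorphism $F_1$ whose image $F_1(\eta)$ is isotropic in $\xi$, and since a concatenation of homotopies of monomorphisms covering $e$ is again such a homotopy, it suffices to deform $F_1$, through monomorphisms covering $e$, into a monomorphism $G_1$ with $G_1(\eta)\subset\xi$, $\alpha(G_1X)>0$, and $(G_1^*d\alpha)|_\eta=h_2\,d\beta|_\eta$ for some positive function $h_2$. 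The first two properties give $h_1\,\alpha\circ G_1=\beta$ with $h_1=1/\alpha(G_1X)>0$, so such a $G_1$ is then formally iso-Reeb.

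First I would fix the transverse (Reeb) direction. Keeping $F_1|_\eta$ unchanged, so that $F_1(\eta)$ stays an isotropic subbundle of $\xi$, I rotate $F_1(X)$ inside $TM$ to a vector transverse to $\xi$ with positive $\alpha$-component. This is possible because, to keep $F_1$ a monomorphism, the vector $F_1(X)$ need only avoid the subbundle $F_1(\eta)$, which has corank at least two, so its admissible values form a bundle with connected fibres meeting $\{\alpha>0\}$; the required homotopy exists by general position in the ambient codimension (recall $\dim M\geq 3\dim N$ is in force). After this step $F_1^*\alpha$ vanishes on $\eta$ and is positive on $X$.

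The core step turns on the symplectic form along $\eta$. Fix a compatible complex structure $J$ on the symplectic bundle $(\xi,d\alpha)$. Isotropy of $P:=F_1(\eta)$ forces $P\cap JP=0$, so $V:=P\oplus JP$ is a symplectic subbundle of $\xi$ of rank $2(\dim N-1)$, providing a workspace containing $P$ in which the ambient codimension guarantees enough extra room. Fibrewise, a monomorphism $\psi_x:\eta_x\to\xi_x$ with $\psi_x^*d\alpha=(d\beta|_\eta)_x$ exists: decompose $\eta_x=S_x\oplus K_x$ with $K_x=\ker(d\beta|_\eta)_x$ and $d\beta|_\eta$ nondegenerate on $S_x$, map $S_x$ symplectically into $\xi_x$ and $K_x$ isotropically into the symplectic complement of its image. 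These fibrewise maps assemble, by obstruction theory over $N$, into a bundle monomorphism $G_1|_\eta:\eta\to\xi$ with $G_1(\eta)\subset\xi$ and $(G_1^*d\alpha)|_\eta=h_2\,d\beta|_\eta$; the same connectivity estimates provide a homotopy, through monomorphisms $\eta\to\xi$, from $F_1|_\eta$ to this $G_1|_\eta$. Extending that homotopy by leaving the $X$-component equal to $F_1(X)$ deforms $F_1$ to a monomorphism $G_1$ covering $e$ with all the required properties. Concatenating $F_t$ with the homotopies of this and the previous paragraph, and reparametrising to $[0,1]$, yields $G_t$ and shows that $e$ is a formal iso-Reeb embedding.

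The hard part is this core step: promoting the fibrewise realisation of $h_2\,d\beta|_\eta$ as a $d\alpha$-pullback to a genuine bundle monomorphism, and then connecting it to $F_1|_\eta$. This is exactly where the generous codimension is spent — to obtain enough room around the isotropic subbundle $F_1(\eta)$, and to make the relevant spaces of linear monomorphisms (isotropic ones, and ones realising a prescribed $2$-form) connected enough over a base of dimension $\dim N$ for the obstruction-theoretic gluing and for the homotopies to exist. One also has to be slightly careful when the rank of $d\beta|_\eta$ jumps, carrying out the construction over the open strata of constant rank and matching them up, which is the only genuinely delicate point; see \cite{CMPP1} for the details.
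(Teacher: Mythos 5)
Your overall outline --- first adjust $F_1(X)$ to be positively transverse to $\xi$, then correct $F_1|_\eta$ so that the $d\alpha$-pullback along $\eta$ becomes a positive multiple of $d\beta|_\eta$, and finally concatenate homotopies --- is the right shape, and the first step is fine. But the core step has a genuine gap, and the tool you reach for (fibrewise construction plus obstruction theory) is the wrong one here.

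The difficulty is exactly where you flag it and then wave it away. The decomposition $\eta_x = S_x\oplus K_x$ with $K_x=\ker(d\beta|_\eta)_x$ is not continuous in $x$: the rank of $d\beta|_\eta$ can and generically does jump, so the dimensions of $S_x$ and $K_x$ jump, and the fibrewise monomorphisms $\psi_x$ built from this decomposition do not assemble into a continuous bundle map. ``Carrying out the construction over the open strata of constant rank and matching them up'' is not an argument --- the strata can be badly behaved and there is no obvious way to match across them. In fact the obstruction-theoretic machinery in Step~3 is used to arrange isotropy of $F_1(\eta)$; it is not the right mechanism for prescribing the pullback $2$-form, which is an algebraic, not homotopical, problem.

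What the isotropy hypothesis actually buys, and what makes the lemma work canonically, is a Lagrangian complement. Fix a compatible complex structure $J$ on $(\xi,d\alpha)$ as you do, so that $V:=F_1(\eta)\oplus JF_1(\eta)$ is a symplectic subbundle of $\xi$ and $L:=JF_1(\eta)$ is a Lagrangian complement to $F_1(\eta)$ in $V$. The pairing $d\alpha$ identifies $L\cong F_1(\eta)^*\cong\eta^*$ (via $F_1$), so there is a \emph{unique} bundle map $\psi:\eta\to L$ determined fibrewise by
\[
d\alpha\bigl(\psi(u),F_1(v)\bigr)=\tfrac12\, d\beta(u,v)\,,\qquad u,v\in\eta\,.
\]
No rank hypotheses or stratification are needed: $\psi$ is defined purely by linear algebra on each fibre and is smooth. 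Set $G_1:=F_1+\psi$ on $\eta$ and $G_1(X):=F_1(X)$ (with $F_1(X)$ already rotated to be positively transverse to $\xi$). Then for $u,v\in\eta$, using isotropy of both $F_1(\eta)$ and $L$,
\[
G_1^*d\alpha(u,v)=d\alpha\bigl(F_1 u,\psi v\bigr)+d\alpha\bigl(\psi u, F_1 v\bigr)=-\tfrac12 d\beta(v,u)+\tfrac12 d\beta(u,v)=d\beta(u,v)\,,
\]
so $h_2\equiv1$ works, and $\alpha\circ G_1$ is a positive multiple of $\beta$ since $G_1(\eta)\subset\xi$ and $\alpha(G_1 X)>0$. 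The linear homotopy $F_1+t\psi$, $t\in[0,1]$, stays in monomorphisms (because $L$ is a complement to $F_1(\eta)$ inside $\xi$, and $F_1(X)\notin\xi$), so concatenating it with $F_t$ gives the required homotopy in Definition~\ref{fR}. This is the point your proposal misses: the isotropy hypothesis is there precisely so that prescribing the pullback $2$-form is a one-line Lagrangian-complement computation, not an obstruction-theoretic gluing over rank strata. (The paper itself defers the proof to~\cite{CMPP1}; the above is the mechanism used there.)
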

For $2m> \dim N$, it can be proved that there is a family of monomorphisms $H_t: TN \rightarrow TM$ such that $F_1(X) \pitchfork \xi$, and furthermore $F_1(\eta)\subset \xi$. The previous lemma shows that a sufficient condition for being a formal iso-Reeb embedding is that $F_1(\eta)$ can be deformed into an isotropic subbundle of $\xi$. Recall that $n$ denotes the dimension of $N$, hence $\eta$ has rank $n-1$. The manifold $M$ is of dimension $2m+1$ hence $\xi$ is of rank $2m$. Denote by $\operatorname{Gr}=\operatorname{Grass}(n-1,\mathbb{R}^{2m})$ the space of $(n-1)$-subspaces of $\mathbb{R}^m$. Similarly, denote by $\operatorname{Gr}_{\operatorname{is}}=\operatorname{Grass}_{\operatorname{is}}(n-1,\mathbb{R}^{2m})$ the space of isotropic subspaces of dimension $n-1$ in $\mathbb{R}^{2m}$ seen as $\mathbb{C}^m$. To find a path between $\eta$ and an isotropic subspace of $\xi$ over $N$, we need to find a global section of the bundle $E$ over $N$ whose fiber is
$$ P= \operatorname{Path}(\operatorname{Grass}(n-1,\mathbb{R}^{2m}),\operatorname{Grass}_{\operatorname{is}}(n-1,\mathbb{R}^{2m})), $$
i.e., the space of paths between any $(n-1)$-subspace and any isotropic $(n-1)$-subspace of $\mathbb{R}^{2m}$. On the other hand, we know that the homotopy groups of such a path space depend on the relative homotopy groups
$$ \pi_j (P) \cong \pi_{j+1}( \operatorname{Grass}(n-1,\mathbb{R}^{2m}), \operatorname{Grass}_{\operatorname{is}}(n-1,\mathbb{R}^{2m})).$$
We now use that
\begin{eqnarray*}
\operatorname{Gr} &  \cong  & \frac{SO(2m)}{SO(n-1) \times SO(2m-(n-1))}, \\
\operatorname{Gr}_{is} & \cong & \frac{U(m)}{SO(n-1)\times U(m-(n-1))}.
 \end{eqnarray*}
 Combining the exact sequence for relative pairs, the exact sequence for quotients, and using the stable range of the involved groups, we can show that
$$ 2m\geq 3n-1 \implies \pi_{j}(P)=0 \text{ for all } j\leq n-1\,. $$
Hence, if $\dim M\geq 3\dim N$, we can find a global section along $N$. Using this section and the previous family of monomorphisms, we find a family of isomorphisms $G_t:TN \rightarrow TM$ covering the smooth embedding $e$ such that $G_1(\eta)$ is an isotropic subbundle of $\xi$. Applying Lemma \ref{lem:symp}, we conclude that $e$ is a formal iso-Reeb embedding.

\paragraph{\textbf{Step 4: Conclusion.}} In step $3$ we showed that any smooth embedding is a formal iso-Reeb embedding for any pair $(N,X)$ embedded into a contact manifold $(M,\xi)$ such that $\dim M\geq 3\dim N$. Note that smooth embeddings in this context always exist by Whitney's embedding theorem. Under the assumption that $M$ is overtwisted, we can apply the $h$-principle proved in Step 2 and deduce that there is an iso-Reeb embedding $\tilde e$ isotopic to $e$.  Since an iso-Reeb embedding is in particular a Reeb embedding, we can find some contact form $\alpha$ defining $\xi$ whose Reeb field $R$ satisfies $\tilde e_*X=R|_{\tilde e(N)}$. This concludes the proof of the theorem.
\end{proof}

The previous theorem ``fixes" the target contact structure, which forces to take an embedding that is isotopic to the original smooth embedding $e:N\rightarrow (M,\xi)$. If we simply want to extend the vector field $X$ to a Reeb vector field, without fixing the ambient contact structure, then we can fix the embedding.

\begin{cor}\label{cor:fixedemb}
Let $X$ be a geodesible vector field on a compact manifold $N$. Let $e:N\rightarrow (M,\xi)$ be a smooth embedding into a contact manifold with $\dim M\geq 3\dim N+2$. Then there is a contact form $\alpha$ on $M$ whose Reeb field $R$ satisfies $e_*X=R|_{e(N)}$. The contact form $\alpha$ defines a contact structure contactomorphic to~$\xi$.
\end{cor}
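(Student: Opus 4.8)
The plan is to derive the corollary from Theorem~\ref{thm:flexReeb} by transporting the contact form along an ambient isotopy, exploiting the fact that the only difference between the two statements is whether one keeps the embedding or the contact structure fixed. First I would apply Theorem~\ref{thm:flexReeb} to the smooth embedding $e:(N,X)\hookrightarrow(M,\xi)$: the hypothesis $\dim M\geq 3\dim N+2$ is precisely what is required (in fact stronger than needed here, since we will not use the $C^0$-closeness of the resulting embedding). This produces a Reeb embedding $\tilde e:(N,X)\hookrightarrow(M,\xi)$ isotopic to $e$, together with a contact form $\tilde\alpha$ with $\ker\tilde\alpha=\xi$ whose Reeb field $\tilde R$ satisfies $\tilde e_*X=\tilde R|_{\tilde e(N)}$.

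Next I would upgrade the isotopy between $e$ and $\tilde e$ to an ambient isotopy. Let $e_t$, $t\in[0,1]$, be a smooth family of embeddings with $e_0=e$ and $e_1=\tilde e$. Since $N$ is compact, the isotopy extension theorem provides a compactly supported smooth isotopy $\Phi_t\colon M\to M$ with $\Phi_0=\mathrm{id}_M$ and $\Phi_t\circ e=e_t$ for all $t$; in particular $\Phi_1\circ e=\tilde e$. I then set
\[
\alpha:=\Phi_1^*\tilde\alpha .
\]

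It remains to verify that $\alpha$ has the claimed properties, which is a routine computation with pullbacks. Since $\Phi_1$ is a diffeomorphism, $\alpha$ is a contact form and $\Phi_1\colon(M,\ker\alpha)\to(M,\xi)$ is a contactomorphism, so $\ker\alpha$ is contactomorphic to $\xi$. Because pullback by a diffeomorphism commutes with $d$ and with interior products, the vector field $R:=(\Phi_1^{-1})_*\tilde R=\Phi_1^*\tilde R$ satisfies $\iota_R\alpha=\Phi_1^*(\iota_{\tilde R}\tilde\alpha)=1$ and $\iota_R d\alpha=\Phi_1^*(\iota_{\tilde R}d\tilde\alpha)=0$, so $R$ is the Reeb field of $\alpha$. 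Finally, for $q\in N$ write $p=e(q)$, so that $\Phi_1(p)=\tilde e(q)$, and compute
\[
R_p=(d\Phi_1^{-1})\big(\tilde R_{\tilde e(q)}\big)=(d\Phi_1^{-1})\big(d\tilde e(X_q)\big)=d\big(\Phi_1^{-1}\circ\tilde e\big)(X_q)=de(X_q)=(e_*X)_p ,
\]
using $\Phi_1^{-1}\circ\tilde e=e$. Hence $R|_{e(N)}=e_*X$, as desired.

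The argument is short, and there is no serious obstacle once Theorem~\ref{thm:flexReeb} is available: the proof is essentially a conjugation of the pair (contact form, Reeb field) by $\Phi_1$, which converts the statement ``$\tilde e$ is a Reeb embedding for the fixed structure $\xi$'' into ``$e$ is a Reeb embedding for a structure contactomorphic to $\xi$''. The only points deserving a little care are the invocation of the isotopy extension theorem in the possibly noncompact manifold $M$ — legitimate precisely because $N$ is compact, so the isotopy may be taken compactly supported and $\Phi_1$ is a genuine diffeomorphism of $M$ — and the bookkeeping of pushforwards and pullbacks in the final identity.
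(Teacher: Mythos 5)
Your argument is correct and coincides with the paper's own proof: both apply Theorem~\ref{thm:flexReeb} to obtain a Reeb embedding $\tilde e$ isotopic to $e$ and then conjugate the contact form (and hence the Reeb field) by the ambient diffeomorphism carrying $\tilde e$ back to $e$; your $\Phi_1$ is simply the inverse of the paper's $\varphi_1$. The only difference is that you spell out the use of the isotopy extension theorem and the pullback bookkeeping, which the paper leaves implicit.
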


\begin{proof}
It follows from Theorem \ref{thm:flexReeb} that there is a Reeb embedding $\tilde e$ (with respect to the contact structure $\xi$) isotopic to $e$. According to Definition \ref{def:ReebEmb}, there is a contact one-form $\alpha'$ defining $\xi$ such that the Reeb field $R'$ of $\alpha'$ satisfies $\tilde e_*X=R'|_{\tilde e(N)}$. Let $\varphi_t$ be an isotopy of $M$ such that $\varphi_1 \circ \tilde e=e$. Then $\alpha:=(\varphi_1^{-1})^*\alpha'$ is a contact one-form, defining a contact structure ${(\varphi_1)}_*\xi$, whose Reeb field $R={(\varphi_1)}_*R'$ satisfies
$$ e_* X= {(\varphi_1)}_* \circ \tilde e_* X= {(\varphi_1)}_* R'=R\,,$$
thus concluding the proof.
\end{proof}

\subsection{Applications to universality}

We are now ready to give some applications of Theorem \ref{thm:flexReeb}. The following concept is inspired by Tao's definition of Euler-extendibility in~\cite{T3} (albeit it is different in the sense that it is adapted to the context of stationary solutions of the Euler equations).

\begin{defin}\label{def1}
A non-autonomous time-periodic vector field $u_0(\cdot,t)$ on a compact manifold $N$ is \emph{Euler-extendible} if there exists an embedding $e:N\times \mathbb S^1 \rightarrow \mathbb S^n$ for some dimension $n>\text{dim }N+1$ (that only depends on the dimension of $N$), and an Eulerisable flow $u$ on $\mathbb S^n$, such that $e(N\times\mathbb S^1)$ is an invariant submanifold of $u$ and $e_*(u_0(\cdot,\theta)+\partial_\theta)=u|_{e(N\times \mathbb S^1)}$, $\theta\in\mathbb S^1$. If the non-autonomous field $u_0(\cdot,t)$ is not time-periodic, we say that it is Euler-extendible if there exists a proper embedding $e:N\times \mathbb R \rightarrow \mathbb R^n$ for some dimension $n>\text{dim }N+1$ (that only depends on the dimension of $N$), and an Eulerisable flow $u$ on $\mathbb R^n$, such that $e(N\times\mathbb R)$ is an invariant submanifold of $u$ and $e_*(u_0(\cdot,\theta)+\partial_\theta)=u|_{e(N\times\mathbb R)}$, $\theta\in\mathbb R$.
If any non-autonomous dynamics $u_0(\cdot,t)$ is Euler-extendible, we say that the stationary Euler flows are \emph{universal}.
\end{defin}

Roughly speaking, the extendibility of a non-autonomous dynamics implies that, in the appropriate local coordinates, $u_0$ describes the ``horizontal'' behavior of the integral curves of the extended vector field $u$. Observe that the original vector field $u_0$ is not assumed to be volume-preserving, although certainly $u$ will be. We introduce another definition for embeddability of discrete dynamics.

\begin{defin}\label{def:Euleremb}
We say that a (orientation-preserving) diffeomorphism $\phi:N\rightarrow N$ is \emph{Euler-embeddable} if there exists an Eulerisable field $u$ on $\mathbb S^n$ (for some $n$ that only depends on the dimension of $N$) with an invariant submanifold exhibiting a cross-section diffeomorphic to $N$ such that the first return map of $u$ at this cross section is conjugate to $\phi$.
\end{defin}

Two main corollaries of the previous construction can be expressed in terms of these two definitions.

\begin{cor}[\cite{CMPP1}]\label{cor:univ}
The stationary Euler flows are universal. Moreover, the dimension of the ambient manifold $\mathbb S^n$ or $\mathbb R^n$ is the smallest odd integer~$n\in\{3\text{ dim }N+5,3\text{ dim }N+6\}$. In the time-periodic case, the extended field $u$ is a steady Euler flow with a metric $g=g_0+\delta_P$, where $g_0$ is the canonical metric on $\mathbb S^n$ and $\delta_P$ is supported in a ball that contains the invariant submanifold $e(N\times\mathbb S^1)$.
\end{cor}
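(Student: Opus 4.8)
The plan is to reduce the non-autonomous problem to an autonomous one by suspension, realize the suspended field as (part of) a Reeb field on a sphere by means of Theorem~\ref{thm:flexReeb}, and finally transfer it to a Beltrami -- hence stationary Euler -- field via the Etnyre--Ghrist correspondence of Theorem~\ref{thm:EGcorres}.

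First I would pass to $N\times\mathbb S^1$, with $\theta$ the coordinate on $\mathbb S^1=\mathbb R/\mathbb Z$, and consider the everywhere nonzero suspension field $X:=u_0(\cdot,\theta)+\partial_\theta$, which is well defined by periodicity of $u_0$. The point to check at the outset is that $X$ is geodesible: taking $\beta:=d\theta$ one has $\beta(X)=1$ (because $u_0(\cdot,\theta)$ is tangent to the $N$-factor) and $\iota_Xd\beta=0$ (because $d\beta=0$), so Gluck's criterion applies, with $\eta:=\ker\beta$. Next I would let $n$ be the smallest odd integer satisfying $n\ge 3\dim N+5=3\dim(N\times\mathbb S^1)+2$, so that $n\in\{3\dim N+5,\,3\dim N+6\}$, and fix (via Whitney's theorem) a smooth embedding $e_0:N\times\mathbb S^1\hookrightarrow\mathbb R^n\subset\mathbb S^n$; being compact, its image together with a small tubular neighbourhood $U$ lies inside a ball $B\subset\mathbb S^n$.

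Then I would apply Theorem~\ref{thm:flexReeb} to $e_0:(N\times\mathbb S^1,X)\hookrightarrow(\mathbb S^n,\xi_{\mathrm{std}})$, whose codimension hypothesis $n\ge 3\dim(N\times\mathbb S^1)+2$ holds by construction (note $\xi_{\mathrm{std}}$ is tight, so the sharper overtwisted bound is unavailable, which is exactly why $3\dim N+5$ rather than $3\dim N+3$ appears). This yields an embedding $e$ isotopic and $C^0$-close to $e_0$, with tubular neighbourhood contained in $U\subset B$, together with a contact form $\alpha$ defining $\xi_{\mathrm{std}}$ whose Reeb field $R$ satisfies $e_*X=R|_{e(N\times\mathbb S^1)}$. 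Since $e_*X$ is tangent to $e(N\times\mathbb S^1)$, this submanifold is invariant by the flow of $R$ and on it $R=e_*(u_0(\cdot,\theta)+\partial_\theta)$. Finally Theorem~\ref{thm:EGcorres} promotes $R$ to a smooth, nonsingular, rotational Beltrami field for a suitable metric $g$ on $\mathbb S^n$, hence to an Eulerisable (stationary Euler) flow; setting $u:=R$ gives the desired Euler-extension, and the dimension count has already been arranged. The non time-periodic case is handled identically on $N\times\mathbb R$ with $X:=u_0(\cdot,t)+\partial_t$ and $\beta:=dt$, using a proper embedding into $\mathbb R^n$ with its standard contact structure and the non-compact (proper) variant of Theorem~\ref{thm:flexReeb}.

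The step I expect to require genuine care, the rest being essentially formal, is the final metric assertion in the periodic case, namely $g=g_0+\delta_P$ with $\delta_P$ supported in a ball. For this I would keep track of the locality of both ingredients: the $h$-principle underlying Theorem~\ref{thm:flexReeb} deforms the contact structure only inside an arbitrarily small neighbourhood of the image, so $\alpha$ can be arranged to coincide with the standard contact form of $\mathbb S^n$ outside $U$, where its Reeb field is the Hopf field; and the Hopf field is already Beltrami for the round metric $g_0$, so the metric furnished by Theorem~\ref{thm:EGcorres} can be taken equal to $g_0$ outside $U$. Then $\delta_P:=g-g_0$ is a symmetric $2$-tensor supported in $\overline U\subset B$, and $B$ contains the invariant submanifold, as required. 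The one genuinely delicate point is checking that the $h$-principle of Theorem~\ref{thm:flexReeb} can be run relative to the complement of $U$ and that the Etnyre--Ghrist metric construction inherits this locality; establishing that forces one to revisit the constructions of~\cite{CMPP1} rather than apply the theorems as black boxes.
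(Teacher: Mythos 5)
Your proposal is correct and follows essentially the same route as the paper (which itself defers the proof to~\cite{CMPP1}): suspend $u_0$ to the geodesible field $X=u_0(\cdot,\theta)+\partial_\theta$ on $N\times\mathbb S^1$ with the one-form $\beta=d\theta$, choose the smallest odd $n$ with $n\ge 3\dim(N\times\mathbb S^1)+2=3\dim N+5$, apply Theorem~\ref{thm:flexReeb} to a smooth embedding into $(\mathbb S^n,\xi_{\mathrm{std}})$, and pass through the Etnyre--Ghrist mirror of Theorem~\ref{thm:EGcorres} to obtain the Beltrami/Eulerisable field; the tight target explains why the bound $3\dim N+5$ (rather than the overtwisted $3\dim N+3$) appears, exactly as you note. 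You also correctly identify the one place that cannot be handled with the theorems as black boxes, namely the assertion $g=g_0+\delta_P$ with $\delta_P$ compactly supported: this requires checking that the contact deformations in the $h$-principle can be made to fix the standard contact form outside a neighbourhood of the image (as the proof sketch of Theorem~\ref{thm:flexReeb} indicates with the local homotopy $(\xi_t,\omega_t)$), and that the Etnyre--Ghrist metric can be taken to agree with the round metric $g_0$ wherever $\alpha=\alpha_{\mathrm{std}}$; the paper confirms this reasoning explicitly in the discussion following Corollaries~\ref{cor:diffeo} and~\ref{cor:turing}. So there is no genuine gap in your approach, only an honestly-flagged dependence on the locality properties of the constructions in~\cite{CMPP1}.
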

It is clear that the extension to an Euler flow $u$ is not unique, since Theorem \ref{thm:flexReeb} shows that iso-Reeb embeddings exist in abundance. Corollary \ref{cor:fixedemb}, via the correspondence Theorem \ref{thm:EGcorres}, illustrates the flexibility of steady Euler flows in the sense that \emph{any} fixed smooth embedding in high enough codimension can be realized as an invariant submanifold (with arbitrary induced geodesible dynamics) of a steady Euler flow. Our second corollary is expressed in terms of Definition \ref{def:Euleremb}.
\begin{cor}[\cite{CMPP1}]\label{cor:diffeo}
Let $N$ be a compact manifold and $\phi$ an orientation-preserving diffeomorphism on $N$. Then $\phi$ is Euler-embeddable in $\mathbb S^n,$ where $n$ is the smallest odd integer $n\in\{3\text{ dim }N+5,3\text{ dim }N+6\}$.
\end{cor}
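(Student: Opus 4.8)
The plan is to realize $\phi$ as the first return map of a suspension flow and then feed that flow into Corollary~\ref{cor:fixedemb}. First I would pass to the mapping torus $N_\phi:=(N\times[0,1])/{\sim}$, where $(x,1)\sim(\phi(x),0)$. This is a closed manifold of dimension $\dim N+1$, fibering over $\mathbb{S}^1$ with fiber $N$, and it carries the suspension vector field $X_\phi$ induced by $\partial_t$ on $N\times[0,1]$. By construction, the image of $N\times\{0\}$ in $N_\phi$ is a global cross-section of $X_\phi$ whose first return map is exactly $\phi$ (the orientation-preserving hypothesis on $\phi$ is what makes $N_\phi$ an honest oriented closed manifold; note also that $N_\phi$ is not diffeomorphic to $N\times\mathbb{S}^1$ unless $\phi$ is isotopic to the identity, so Corollary~\ref{cor:univ} cannot be invoked directly).

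The second step is to verify that $X_\phi$ is geodesible of unit length, so that the hypotheses of Corollary~\ref{cor:fixedemb} are met. By Gluck's characterization it is enough to produce a one-form $\beta$ with $\beta(X_\phi)=1$ and $\iota_{X_\phi}d\beta=0$. The angular one-form $d\theta$, which is globally well defined on $N_\phi$ as the pullback of the standard angular form under the fibration $N_\phi\to\mathbb{S}^1$, does the job: it is closed, so $\iota_{X_\phi}d(d\theta)=0$, and $d\theta(X_\phi)=1$ since $X_\phi$ projects to $\partial_\theta$. Hence $X_\phi$ is geodesible.

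Next I would fix an odd integer $n\geq 3\dim N_\phi+2=3\dim N+5$; the smallest such $n$ is $3\dim N+5$ when $\dim N$ is even and $3\dim N+6$ when $\dim N$ is odd, which is exactly the range in the statement. By Whitney's embedding theorem there is a smooth embedding $e:N_\phi\hookrightarrow\mathbb{S}^n$ (endowed with, say, its standard contact structure), and Corollary~\ref{cor:fixedemb} then yields a contact form $\alpha$ on $\mathbb{S}^n$ whose Reeb field $R$ satisfies $e_*X_\phi=R|_{e(N_\phi)}$. Applying the Etnyre--Ghrist correspondence (Theorem~\ref{thm:EGcorres}), $R$ is a nonsingular rotational Beltrami field for a suitable Riemannian metric on $\mathbb{S}^n$, hence Eulerisable. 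The submanifold $e(N_\phi)$ is invariant under $R$ and exhibits the cross-section $e(N\times\{0\})\cong N$; since $R$ restricted to $e(N_\phi)$ equals $e_*X_\phi$ (and not merely a positive reparametrization of it, which would anyway not affect the return map), the first return map of $R$ at this cross-section is $e\circ\phi\circ e^{-1}$, conjugate to $\phi$. Thus $\phi$ is Euler-embeddable in $\mathbb{S}^n$ in the sense of Definition~\ref{def:Euleremb}.

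Once Corollary~\ref{cor:fixedemb} is available the argument is essentially formal, so I do not expect a serious obstacle; the two points that need care are the dimension bookkeeping---the codimension condition $\dim M\geq 3\dim N+2$ of Theorem~\ref{thm:flexReeb} must be applied to $N_\phi$ rather than to $N$, which is precisely what produces the ``$+5$'' instead of a ``$+2$''---and the observation that passing from $X_\phi$ to the Reeb field $R$ leaves the first return map untouched, which holds because $e_*X_\phi$ is literally the Reeb field along the invariant submanifold.
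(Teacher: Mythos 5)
Your argument is correct and is precisely the route the paper intends: form the mapping torus $N_\phi$ with its geodesible suspension field, check the codimension bound $n\geq 3\dim N_\phi+2=3\dim N+5$, and invoke Corollary~\ref{cor:fixedemb} together with the Etnyre--Ghrist correspondence to produce an Eulerisable Reeb field whose first return map on the cross-section $e(N\times\{0\})$ is conjugate to $\phi$. The dimension bookkeeping (smallest odd $n$ in $\{3\dim N+5,3\dim N+6\}$) and the observation that the return map is unaffected by reparametrization are both handled correctly.
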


As in Corollary \ref{cor:univ}, the metric can also be assumed to be the canonical one outside an embedding of the mapping torus of $N$ by $\phi$. This is ensured by applying Theorem \ref{thm:flexReeb} with a tight contact sphere as the target contact manifold. The dimensional bounds can be slightly improved if we use an overtwisted contact sphere as target manifold, as explained after the statement of Theorem \ref{thm:flexReeb}. In the following section, we shall introduce the concept of ``Turing complete" flows, which are flows that are universal in a computational sense. Using the fact that there are diffeomorphisms that simulate any Turing machine (see \cite{T1} for an example), and the fact that our construction via an $h$-principle is constructible (i.e., algorithmic), we obtain as a by-product that there is a Turing complete Euler flow on $\mathbb S^{17}$. In the next section, we will focus on this property and drastically improve the dimension of the ambient manifold.

\section{A Turing complete steady Euler flow on $\mathbb S^3$}\label{S.Turing}

In this section we review the construction of a Turing complete stationary Euler flow on a Riemannian three-sphere \cite{CMPP2}. We end up by proving a new result (Corollary \ref{cor:orbit}) on the existence of Reeb flows (and their Beltrami counterparts) with orbits whose periodicity is undecidable.

\subsection{Turing machines and symbolic dynamics}

A Turing machine is a mathematical model of a theoretical device manipulating a set of symbols on a tape following some specific rules. It receives, as input data, a sequence of symbols and, after a number of steps it might return as output another string of symbols. More concretely:

A Turing machine is defined via the following data:

\begin{itemize}
\item A finite set $Q$ of ``states'' including an initial state $q_0$ and a halting state $q_{halt}$.
\item A finite set $\Sigma$ which is the ``alphabet'' with cardinality at least two.
\item A transition function $\delta:Q\times \Sigma \longrightarrow Q\times \Sigma \times \{-1,0,1\}$.
\end{itemize}

We will denote by $q\in Q$ the current state, and by $t=(t_n)_{n\in \mathbb{Z}}\in \Sigma^\mathbb{Z}$ the current tape of the machine at a given step of the algorithm of the Turing machine. This gives a configuration $(q,t)$ of the machine. In particular, the space of all possible \emph{configurations} of a Turing machine is given by $\mathcal{P}:=Q\times \Sigma^\mathbb{Z}$. The algorithm works as follows, for a given input tape $t\in \Sigma^\mathbb{Z}$.

\begin{enumerate}
\item Set the current state $q$ as the initial state and the current tape $t$ as the input tape.
\item If the current state is $q_{halt}$ then halt the algorithm and return $t$ as output. Otherwise compute $\delta(q,t_0)=(q',t_0',\varepsilon)$, with $\varepsilon \in \{-1,0,1\}$.
\item Replace $q$ with $q'$ and $t_0$ with $t_0'$, obtaining a modified tape $\tilde t=(...t_{-1}.t_0't_1...)$.
\item Shift $\tilde t$ by $\varepsilon$, obtaining a new tape $t'$. The resulting configuration is $(q',t')$. Return to step $(2)$.
\end{enumerate}
Our convention is that $\varepsilon=1$ (resp. $\varepsilon=-1$) corresponds to the left shift (resp. the right shift). This algorithm (determined by the transition function $\delta$) induces a global transition function in the space of configurations $\Delta: Q\setminus \{q_{halt} \} \times \Sigma^{\mathbb{Z}} \rightarrow \mathcal{P}$, which sends a non-halting configuration in $\mathcal{P}$ to the configuration obtained after one step of the algorithm.

\begin{rem}
Without loss of generality, one can assume that the configurations of the machine are those pairs $(q,t)\in Q\times \Sigma^{\mathbb{Z}}$ for which only a finite number of symbols in $t$ are different from $0$ (also called the ``blank" symbol). We will not need this simplifying assumption in this section, although it is certainly useful in other constructions \cite{CMP2}.
\end{rem}

\paragraph{\textbf{The halting problem:}}

In computability theory, the halting problem is the problem of determining, from a description of an arbitrary computer program and an input, whether the program will finish running (halting state), or continue to run forever. Alan Turing proved in 1936 that a general algorithm to solve the halting problem for all possible program-input pairs cannot exist.
A key part of the proof is  the formulation of a mathematical definition of a computer and program, which is the previously introduced notion of Turing machine; the halting problem is undecidable for Turing machines.
The halting problem is historically important as it was one of the first problems to be proved undecidable.

\paragraph{\textbf{Turing machines and universality}}\label{prelim}

An Eulerisable field on a manifold $M$ is Turing complete if it can simulate any Turing machine. In fact, Turing machines can be simulated by dynamical systems in a large sense (a vector field, a diffeomorphism, a map, etc...).  Following \cite{T1}, we give a formal definition of such a ``simulation".

\begin{defin}\label{TC}
Let $X$ be a vector field on a manifold $M$. We say it is Turing complete if for any integer $k\geq 0$, given a Turing machine $T$, an input tape $t$, and a finite string $(t_{-k}^*,...,t_k^*)$ of symbols of the alphabet, there exist an explicitly constructible point $p\in M$ and an open set $U\subset M$ such that the trajectory of $X$ through $p$ intersects $U$ if and only if $T$ halts with an output tape whose positions $-k,...,k$ correspond to the symbols $t_{-k}^*,...,t_k^*$. A completely analogous definition holds for diffeomorphisms of $M$.
\end{defin}

\begin{rem}
In the construction explained in this Section, the point $p$ depends on $T$, the input and the finite string, while the open set $U$ is always the same. In other constructions of Turing complete flows \cite{T1,CMPP1,CMP1}, the point $p$ only depends on $T$ and the input, and the open set $U$ depends on the finite string of the output. In particular, for a fixed machine and input we construct a point $p$ and we can ``measure" a posteriori what is the output of the machine up to some precision by looking which open sets are intersected by the trajectory of the flow through $p$.
\end{rem}

\begin{rem}
One might as well avoid fixing a finite string of the output $(t_{-k}^*,...,t_k^*)$ and just require that the machine halts if and only if the trajectory through $p$ enters certain open set. As detailed in \cite[Lemma 5.5]{CMPP2}, the computational power is the same with this simplification.
\end{rem}

In 1991, Moore \cite{Mo} introduced the notion of generalized shift to be able to \emph{simulate any Turing machine}; a generalized shift is a map that acts on the space of infinite sequences on a given finite alphabet.

Let $A$ be an alphabet and $S\in A^\mathbb{Z}$ an infinite sequence. A generalized shift $\phi:A^\Z \rightarrow A^\Z$ is specified by two maps $F$ and $G$ which depend on a finite number of specified positions of the sequence in $A^\Z$. Denote by $D_F= \{i,...,i+r-1\}$ and $D_G=\{j,...,j+l-1\}$ the sets of positions on which $F$ and $G$ depend, respectively. These functions take a finite number of different values since they depend on a finite number of positions. The function $G$ modifies the sequence only at the positions indicated by $D_G$:
\begin{align*}
G:A^l &\longrightarrow A^l \\
(s_{j}...s_{j+l-1}) &\longmapsto (s_{j}'...s_{j+l-1}')
\end{align*}
Here $s_j...s_{j+l-1}$ are the symbols at the positions $j,...,j+l-1$ of an infinite sequence $S\in A^\mathbb{Z}$.

On the other hand, the function $F$ assigns to the finite subsequence $(s_{i},...,s_{i+r-1})$ of the infinite sequence $S\in A^\mathbb{Z}$ an integer:
$$ F:A^{r}\longrightarrow \mathbb{Z}. $$

The generalized shift $\phi:A^\Z \rightarrow A^\Z$ corresponding to $F$ and $G$ is defined as follows:
\begin{itemize}
\item Compute $F(S)$ and $G(S)$.
\item Modify $S$ changing the positions in $D_G$ by the function $G(S)$, obtaining a new sequence $S'$.
\item Shift $S'$ by $F(S)$ positions. That is, we obtain a new sequence $s''_n=s'_{n+F(S)}$ for all $n\in \Z$.
\end{itemize}
The sequence $S''$ is then $\phi(S)$.\newline

Given a Turing machine there is a generalized shift $\phi$ conjugate to it. Conjugation means that there is an injective map
$\varphi: \mathcal{P} \rightarrow A^\mathbb{Z}$ such that the global transition function of the Turing machine is given by
$\Delta= \varphi^{-1}\phi\varphi$. In fact, if the Turing machine is reversible, it can be shown that the generalized shift is bijective.

{\bf Key observation}: Generalized shifts are conjugate to maps of the \emph{square Cantor set} $C^2:=C\times C \subset I^2$, where $C$ is the (standard) \emph{Cantor ternary set} in the unit interval $I=[0,1]$.

{\bf Point assignment}: take $A=\{0,1\}$ (this can be assumed without loss of generality). Given $s=(...s_{-1}.s_0s_1...)\in A^\mathbb Z$, we can associate to it an \emph{explicitly constructible point} in the square Cantor set. We just express the coordinates of the assigned point in base $3$: the coordinate $y$ corresponds to the \emph{expansion} $(y_0,y_1,...)$ where $y_i=0$ if $s_i=0$ and $y_i=2$ if $s_i=1$. Analogously, the coordinate $x$ corresponds to the \emph{expansion} $(x_{1},x_{2},...)$ in base $3$ where $x_i=0$ if $s_{-i}=0$ and $x_i=2$ if $s_{-i}=1$.

Moore proved that any generalized shift is conjugate to the restriction on the square Cantor set of a piecewise linear map defined on blocks of the Cantor set in $I^2$. This map consists of finitely many area-preserving linear components. If the generalized shift is bijective, then the image blocks are pairwise disjoint. An example is depicted in Figure \ref{fig:Cantormap}.
 Each linear component is the composition of two linear maps: a \emph{translation} and a positive (or negative) power of the \emph{horseshoe map} (or the Baker's map).

 \begin{figure}[!ht]\label{fig:Cantormap}
\begin{center}
\includegraphics[scale=0.3]{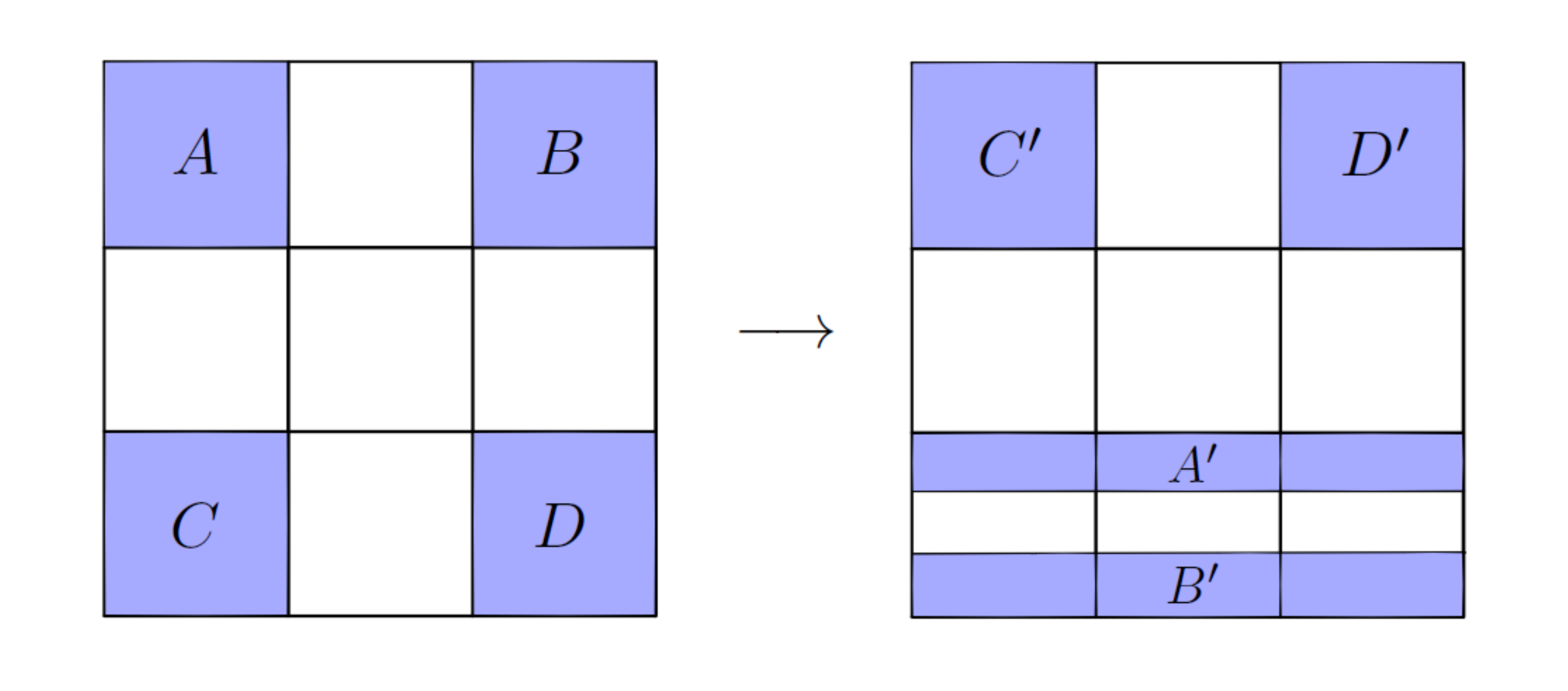}
\caption{Example of a map by blocks of the square Cantor set.}
\end{center}
\end{figure}


\subsection{Area-preserving maps and Turing complete Reeb flows}

In~\cite{CMPP2} we proved that any bijective generalized shift, understood as a map of the square Cantor set onto itself, can be extended as an area-preserving diffeomorphism of the disk which is the identity on the boundary. The proof of this result combines three ingredients: the aforementioned piecewise linear map defined on Cantor blocks, an explicit geometric construction using the homotopy extension property, and Moser's path method to ensure that the diffeomorphism that we obtain is area-preserving. The precise statement is the following:

\begin{prop}\label{prop:areaShift}
For each bijective generalized shift and its associated map of the square Cantor set $\phi$, there exists an area-preserving diffeomorphism of the disk $\varphi:D\rightarrow D$ which is the identity in a neighborhood of $\partial D$ and whose restriction to the square Cantor set is conjugate to $\phi$.
\end{prop}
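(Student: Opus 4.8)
First I would pass to Moore's piecewise linear model: cutting $C^2$ into the finitely many product pieces $P_1,P_2,\dots$ indexed by pairs of base-$3$ words of a sufficiently large depth (exceeding all positions on which the generalized shift depends, and its maximal shift length), so that the restriction of $\phi$ to each $P_i$ is the restriction of a single orientation-preserving affine map $L_i$ of the plane — a power of the Baker map composed with a translation — which has $\det dL_i=1$ and therefore preserves $dx\wedge dy$. I would then take $R_i\supset P_i$ to be a small closed rectangular neighbourhood of $P_i$; for the neighbourhoods small enough the $R_i$ are pairwise disjoint (distinct Cantor pieces are separated by gaps), and — here is where one uses that the generalized shift is \emph{bijective} — the image rectangles $R_i':=L_i(R_i)$ are pairwise disjoint as well. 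After an affine rescaling one may place all $R_i$ and all $R_i'$ in the interior of $D$, and one fixes an area form $\omega$ on $D$ equal to $dx\wedge dy$ on a neighbourhood of $\bigcup_i(R_i\cup R_i')$.

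\textbf{Step 2 (geometric realization via the homotopy extension property).} Next I would realize the assignment $\{R_i\mapsto R_i'\ \text{via}\ L_i\}$ by an ambient diffeomorphism. Both the inclusion $\bigsqcup_i R_i\hookrightarrow\operatorname{int}D$ and the map $\bigsqcup_i R_i\to\operatorname{int}D$ that equals $L_i$ on $R_i$ are orientation-preserving embeddings of a disjoint union of finitely many rectangles into the open disk, and the space of such embeddings is path-connected (shrink each rectangle to a tiny ball, move the centres along a path in the connected configuration space of the disk, rotate the frames through the connected group $GL^+_2(\mathbb{R})$). Hence the two embeddings are joined by an isotopy whose image lies in a compact subset of $\operatorname{int}D$, and the isotopy extension theorem — which here one would implement by hand through the homotopy extension property, so as to keep the construction algorithmic — produces a diffeomorphism $\varphi_0\colon D\to D$ that is isotopic to the identity, equal to the identity near $\partial D$, and satisfies $\varphi_0|_{R_i}=L_i$ for every $i$. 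In particular $\varphi_0|_{C^2}=\phi$, and $\varphi_0^*\omega=\omega$ on the open neighbourhood $\bigcup_i\operatorname{int}R_i$ of $C^2$ because $\det dL_i=1$.

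\textbf{Step 3 (Moser's path method).} Finally I would correct $\varphi_0$ to an area-preserving map without touching its behaviour near $\partial D$ or near $C^2$. Set $\omega_0=\omega$ and $\omega_1=\varphi_0^*\omega$: area forms on $D$ inducing the same orientation, with the same total integral over $D$ (as $\varphi_0$ is an orientation-preserving self-diffeomorphism), agreeing on a neighbourhood of the closed set $K:=C^2\cup\partial D$ (near $\partial D$ since $\varphi_0=\operatorname{id}$, near $C^2$ by Step 2). One picks a primitive $\sigma$ of $\omega_1-\omega_0$ with $\sigma\equiv0$ near $K$: first $\sigma$ compactly supported in $\operatorname{int}D$ (possible since $\omega_1-\omega_0$ vanishes near $\partial D$ and $\int_D(\omega_1-\omega_0)=0$), and then, as $\sigma$ is closed on a neighbourhood $W$ of $C^2$ that may be taken to be a finite disjoint union of rectangles, one writes $\sigma|_W=dg$ and replaces $\sigma$ by $\sigma-d(\chi g)$ with $\chi$ a bump function supported in $W$ and equal to $1$ near $C^2$. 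The Moser vector field $v_t$ defined by $\iota_{v_t}\omega_t=-\sigma$, with $\omega_t=(1-t)\omega_0+t\omega_1$, then vanishes near $K$, so its time-$1$ flow $\psi$ is a diffeomorphism of $D$, equal to the identity near $K$, with $\psi^*\omega_1=\omega_0$. The map $\varphi:=\varphi_0\circ\psi$ satisfies $\varphi^*\omega=\omega$, equals the identity near $\partial D$, and agrees with $\varphi_0$ near $C^2$; hence $\varphi|_{C^2}=\phi$, and in particular the restriction of $\varphi$ to the square Cantor set is conjugate to $\phi$, which is the assertion.

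\textbf{The main obstacle.} I expect Step 2 to carry the bulk of the work. Abstractly it is a one-line appeal to isotopy extension once the relevant embedding space is known to be connected, but what is really needed is an \emph{explicit} disk diffeomorphism, the identity near $\partial D$, that carries each source block onto its target block compatibly with $L_i$; producing such a map by hand — as a controlled sequence of elementary isotopies (shrinkings, translations, shears) whose supports stay inside $\operatorname{int}D$ — is the delicate point, and it relies crucially on the disjointness of the target blocks furnished by the bijectivity hypothesis. The Moser step is comparatively routine, the only mild subtlety being the choice of a primitive vanishing near the totally disconnected set $C^2$.
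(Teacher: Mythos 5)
Your three-step outline -- Moore's block model on the square Cantor set, an ambient diffeomorphism realizing the block assignment via the homotopy extension property, and a Moser-trick correction to area preservation supported away from $C^2\cup\partial D$ -- is precisely the strategy the paper attributes to this result ("the piecewise linear map defined on Cantor blocks, an explicit geometric construction using the homotopy extension property, and Moser's path method"). The details you supply (disjointness of the image rectangles from bijectivity, compactly supported primitive with vanishing total integral, killing the closed part of $\sigma$ near $C^2$ with a cut-off) are the natural way to fill in that outline and raise no issues.
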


Now the idea to construct a Turing complete Reeb flow is to take a Turing complete bijective generalized shift (which exists because there are universal Turing machines that are reversible). Proposition~\ref{prop:areaShift} hence implies the existence of a Turing complete area-preserving diffeomorphism of the disk which is the identity on the boundary, as detailed in \cite[Theorem 5.2]{CMPP2}. Using a suspension construction in contact geometry we can then show that any area-preserving diffeomorphism of the disk can be realized as the first-return map on a cross section of a Reeb flow on any contact three-manifold. In particular, taking the aforementioned Turing complete diffeomorphism we conclude the existence of Turing complete Reeb flows. More precisely:

\begin{thm}\label{thm:ReebDisk}
Let $(M,\xi)$ be a contact $3$-manifold and $\varphi:D \rightarrow D$ an area-preserving diffeomorphism of the disk which is the identity (in a neighborhood of) the boundary. Then there exists a defining contact form $\alpha$ whose associated Reeb vector field $R$ exhibits a Poincar\'e section with first return map conjugate to $\varphi$. In particular, there exists a Reeb field $R$ on $(M,\xi)$ which is Turing complete (in the sense of Definition~\ref{TC}).
\end{thm}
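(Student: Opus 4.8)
The plan is a cut-and-paste argument: first realise $\varphi$ as the first-return map of a Reeb flow on a model solid torus, and then graft this model into an arbitrary $(M,\xi)$, using that one is free to modify the \emph{contact form} rather than the contact structure, and that every contact three-manifold contains a standard tubular neighbourhood of a transverse unknot.

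\textbf{Local suspension.} Fix an area form $\omega=d\lambda$ on $D$, say $\lambda=\tfrac12(x\,dy-y\,dx)$. Since $\varphi$ is area-preserving and equal to the identity near $\partial D$, and $\mathrm{Symp}_c(D,\omega)$ is connected, $\varphi$ is isotopic to $\mathrm{id}_D$ through area-preserving diffeomorphisms supported away from $\partial D$; let $\{\varphi_\theta\}_{\theta\in[0,T]}$ be such an isotopy with $\varphi_0=\mathrm{id}$, $\varphi_T=\varphi$, where $T$ is a large parameter. On the mapping torus $\mathcal N:=(D\times[0,T])/\bigl((x,T)\sim(\varphi(x),0)\bigr)$ one looks for a contact form of the shape $\alpha_\varphi=d\theta+\beta_\theta$, where $\beta_\theta$ is a path of one-forms on $D$ with $\beta_\theta\equiv\lambda$ near $\theta=0$, $\beta_\theta\equiv\varphi^*\lambda$ near $\theta=T$ (so that $\alpha_\varphi$ descends to $\mathcal N$) and $\beta_\theta\equiv\lambda$ near $\partial D$. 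Since
\[
\alpha_\varphi\wedge d\alpha_\varphi=d\theta\wedge\bigl(d_D\beta_\theta-\beta_\theta\wedge\dot\beta_\theta\bigr),
\]
the form is contact as soon as the error term $\beta_\theta\wedge\dot\beta_\theta$ is small compared to $d_D\beta_\theta$, which is achieved by stretching the suspension ($T$ large, so $\dot\beta_\theta=O(1/T)$); a direct computation then shows that the Reeb field of $\alpha_\varphi$ is positively transverse to every disk $D\times\{\theta\}$, so these disks are global Poincar\'e sections and the return map to $D\times\{0\}$ is $\varphi$ composed with the transverse holonomy of the Reeb flow. The delicate point is that this holonomy is not trivial in general: $\varphi$ is only an exact symplectomorphism, it may preserve no primitive of $\omega$ (its Calabi invariant may be nonzero), and this forces the family of transverse area forms $d_D\beta_\theta$ to vary genuinely along the flow. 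One chooses $\beta_\theta$ so that this controlled variation, together with Moser's path method applied to renormalise the transverse area forms to $\omega$, conjugates the return map to $\varphi$. The outcome is a contact solid torus $(\mathcal N,\alpha_\varphi)$ whose Reeb flow has a disk section with return map conjugate to $\varphi$ and which equals the trivial suspension $d\theta+\lambda$ near $\partial\mathcal N$. Running the same construction over the isotopy $\{\varphi_\theta\}$ produces a family of contact forms joining $d\theta+\lambda$ to $\alpha_\varphi$, all equal to $d\theta+\lambda$ near $\partial\mathcal N$, so by Gray stability relative to the boundary $\ker\alpha_\varphi$ is contactomorphic to $\ker(d\theta+\lambda)$, i.e. to the standard contact structure on a neighbourhood of a transverse unknot.

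\textbf{Grafting into $(M,\xi)$.} Every contact three-manifold contains a transverse unknot (take a small loop inside a Darboux ball), and a tubular neighbourhood of it is contactomorphic to $(\mathcal N,\ker(d\theta+\lambda))$; composing with the contactomorphism just obtained gives a contact embedding $j\colon(\mathcal N,\ker\alpha_\varphi)\hookrightarrow(M,\xi)$. Choose any contact form $\alpha_M$ defining $\xi$; then $j^*\alpha_M=u\,\alpha_\varphi$ for a positive function $u$ on $\mathcal N$, and, replacing $\alpha_M$ by $\chi\,\alpha_M$ for a positive $\chi\in C^\infty(M)$ equal to $1/(u\circ j^{-1})$ near $j(\partial\mathcal N)$ and to $1$ outside $j(\mathcal N)$ — still a contact form defining $\xi$ — we may assume $j^*\alpha_M=\alpha_\varphi$ on a collar of $\partial\mathcal N$. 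Now set $\alpha:=\alpha_M$ on $M\setminus j(\mathcal N)$ and $\alpha:=j_*\alpha_\varphi$ on $j(\mathcal N)$; the two definitions agree near $j(\partial\mathcal N)$ (there both equal $\alpha_M$), so $\alpha$ is smooth, it is contact because the contact condition is local, and $\ker\alpha=\xi$ everywhere (on $j(\mathcal N)$ because $j$ is a contact embedding). Its Reeb field equals $j_*R_{\alpha_\varphi}$ on $j(\mathcal N)$ — tangent to $j(\partial\mathcal N)$, hence with $j(\mathcal N)$ flow-invariant — so it admits the Poincar\'e section $j(D\times\{0\})$ with return map conjugate to $\varphi$, which proves the first assertion.

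\textbf{Turing completeness.} Applying the above to the Turing complete area-preserving diffeomorphism of the disk provided by Proposition~\ref{prop:areaShift} (built from a reversible universal Turing machine) yields a Reeb field $R$ on $(M,\xi)$ whose return map $P$ on a disk section $\Sigma$ is conjugate, via an explicitly constructible diffeomorphism $g$, to that disk map. Given a Turing machine, an input and an output pattern, let $p_0\in D$ and $U_0\subset D$ be the constructible point and open set witnessing Turing completeness of the disk map; put $p:=g(p_0)$ and let $U\subset M$ be a thin flow-box around $g(U_0)$. The $R$-orbit through $p$ stays in $j(\mathcal N)$ and returns to $\Sigma$ exactly at the iterates $P^n(p)=g(\varphi^n(p_0))$, so it enters $U$ if and only if some $\varphi^n(p_0)$ lies in $U_0$, i.e. if and only if the machine halts with the prescribed output; thus $R$ is Turing complete in the sense of Definition~\ref{TC}. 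The main obstacle is the local suspension: producing simultaneously the contact condition and the prescribed return map is not a formality, since the Reeb holonomy across the cross-sections carries the Calabi invariant of $\varphi$ and hence the transverse area form cannot be kept fixed; reconciling this with the descent to the mapping torus is precisely what the stretched suspension together with Moser's method is for. The grafting and the computational bookkeeping are, by comparison, routine.
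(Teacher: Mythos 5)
Your overall strategy (build a local Reeb model on a solid torus with the prescribed return map, then graft it into $(M,\xi)$ by rescaling a defining form near a transverse unknot) is the standard one and matches the approach in~\cite{CMPP2}. The grafting, the Gray-stability identification of the local model with the standard neighbourhood, and the Turing-completeness bookkeeping are all essentially fine modulo some imprecision in the choice of the conformal factor $\chi$. The gap is in the local suspension step, and it is not a technicality: the ansatz $\alpha_\varphi=d\theta+\beta_\theta$ on the mapping torus $\mathcal N=(D\times[0,T])/(x,T)\sim(\varphi(x),0)$ cannot produce a return map conjugate to $\varphi$ unless $\varphi$ has vanishing Calabi invariant, and Moser's method cannot repair this. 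Indeed, writing $R=a\partial_\theta+Y$, the Reeb equation $\iota_Rd\alpha_\varphi=0$ forces $\iota_Y(d_D\beta_\theta)=-a\dot\beta_\theta$; after the Moser normalisation you invoke (which makes $d_D\beta_\theta\equiv\omega$) the $\theta$-reparametrised disk drift is the Hamiltonian flow of $-g_\theta$, where $\dot\beta_\theta=dg_\theta$. Since $\int_0^T g_\theta\,d\theta=h$ with $dh=\varphi^*\lambda-\lambda$ and $h\equiv 0$ near $\partial D$, the time-$T$ drift $\Psi$ has Calabi invariant $-\int_D h\,\omega=-2\,\mathrm{Cal}(\varphi)$ (integrate $\lambda\wedge dH_t$ by parts). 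Hence the return map $\varphi\circ\Psi$ has $\mathrm{Cal}(\varphi\circ\Psi)=-\mathrm{Cal}(\varphi)\neq\mathrm{Cal}(\varphi)$ whenever $\mathrm{Cal}(\varphi)\neq0$. As the Calabi invariant is a conjugation invariant on $\mathrm{Ham}_c(D)=\mathrm{Symp}_c(D)$, no choice of $\beta_\theta$ and no symplectic renormalisation can make this return map conjugate to $\varphi$. Moser's path method only rectifies the transverse area forms; it does not conjugate the dynamics, and what you call ``transverse holonomy'' is precisely the obstruction, not something Moser can absorb.

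The fix is to not keep the coefficient of $d\theta$ equal to $1$. Write $\varphi=\varphi_1^H$ for a compactly supported $1$-periodic Hamiltonian $H_\theta$ on $D$, work on the untwisted $D\times S^1$, and take
\[
\alpha=(c+H_\theta)\,d\theta+\lambda
\]
for a large constant $c>0$. A direct computation shows $R_\alpha=\bigl(c+H_\theta+\lambda(X_{H_\theta})\bigr)^{-1}\bigl(\partial_\theta+X_{H_\theta}\bigr)$, which is a positive reparametrisation of the suspension of the Hamiltonian isotopy; the return map to $D\times\{0\}$ is therefore \emph{exactly} $\varphi$, with no drift to conjugate away, and $\alpha=c\,d\theta+\lambda$ near $\partial D\times S^1$ so the boundary is standard. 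With this local model your grafting and Turing-completeness arguments go through unchanged. (Also, your Gray-stability step implicitly compares contact forms that live on different mapping tori $\mathcal N_s$; with the untwisted model on $D\times S^1$ this issue disappears, since the family $\alpha_s=(c+sH_\theta)\,d\theta+\lambda$ is a genuine path of contact forms on a fixed manifold, equal to $c\,d\theta+\lambda$ near the boundary.)
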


Combining Proposition~\ref{prop:areaShift}, Theorem~\ref{thm:ReebDisk} and the correspondence Theorem~\ref{thm:EGcorres} between Beltrami fields and Reeb flows, we obtain the desired result for stationary Euler flows.

\begin{cor}\label{cor:turing}
There exists an Eulerisable field $X$ on $\mathbb S^3$ that is Turing complete. The metric $g$ that makes $X$ a solution of the stationary Euler equations can be assumed to be the round metric in the complement of an embedded solid torus.
\end{cor}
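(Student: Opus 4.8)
The plan is to chain the three statements above together, following \cite{CMPP2}. First I would fix a reversible universal Turing machine $T$. Reversibility guarantees that the generalized shift $\phi$ conjugate to $T$ is \emph{bijective}, and by the Key observation of the previous subsection $\phi$ is realized as a bijective map of the square Cantor set $C^2$. Proposition~\ref{prop:areaShift} then produces an area-preserving diffeomorphism $\varphi\colon D\to D$, equal to the identity near $\partial D$, whose restriction to $C^2$ is conjugate to $\phi$; since $T$ is universal, $\varphi$ is Turing complete in the sense of Definition~\ref{TC} (this is \cite[Theorem~5.2]{CMPP2}).

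Second, I would apply Theorem~\ref{thm:ReebDisk} with $(M,\xi)$ the tight contact sphere $(\mathbb S^3,\xi_{std})$ and the diffeomorphism $\varphi$. This yields a defining contact form $\alpha$ whose Reeb field $R$ has a Poincar\'e section on which the first return map is conjugate to $\varphi$. A trajectory of $R$ meets a given open set exactly when the corresponding $\varphi$-orbit on the section does, so $R$ is Turing complete. Moreover, the suspension construction is local: $\alpha$ agrees with the standard contact form $\alpha_{std}$ outside an embedded solid torus $\mathcal T\subset\mathbb S^3$ (a thickened mapping torus of $(D,\varphi)$, which is a solid torus because $\varphi$ is isotopic to the identity), and on $\mathbb S^3\setminus\mathcal T$ the field $R$ coincides with the Hopf field.

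Third, I would pass from the Reeb field to an Eulerisable field via the Etnyre--Ghrist correspondence, Theorem~\ref{thm:EGcorres}, in its relative version. Recall from Section~\ref{SS:contact} that a metric making a positive rescaling of $R$ a nonsingular rotational Beltrami field is obtained by declaring $R\perp_g\ker\alpha$, fixing $|R|_g$, and choosing the fiber metric on $\ker\alpha$ so that its area form is proportional to $d\alpha|_{\ker\alpha}$; with such choices $\operatorname{curl}_g R=fR$ with $f>0$ and $\operatorname{div}_g R=0$, so $R$ solves the stationary Euler equations with pressure $p=\mathrm{const}-\tfrac12|R|_g^2$. The residual freedom is the choice of the fiber metric on $\ker\alpha$ within a convex, hence contractible, family, and I would use it to arrange that outside $\mathcal T$ the resulting metric is the round metric $g_0$ --- which is consistent because there the contact form is $\alpha_{std}$ and $g_0$ is already an admissible Etnyre--Ghrist metric for it, with the Hopf field as its rotational Beltrami field (after a harmless constant normalization that is absorbed by rescaling $R$ only inside $\mathcal T$). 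Interpolating the fiber metrics across $\partial\mathcal T$ via a partition of unity then gives a global metric $g$ equal to $g_0$ off $\mathcal T$ for which the suitably rescaled Reeb field $X$ is Beltrami; $X$ is Turing complete since it is a reparametrization of $R$, and it is the desired Eulerisable field on $\mathbb S^3$.

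The main obstacle is precisely this relative step: verifying that the round metric is an admissible Etnyre--Ghrist metric for the pair $(\alpha_{std},\text{Hopf field})$ on $\mathbb S^3\setminus\mathcal T$ (getting the volume normalization $\mu_g$ versus $\alpha\wedge d\alpha$ right), and checking that the interpolation of fiber metrics across $\partial\mathcal T$ can be carried out without destroying the compatibility between $\mu_g$ and $\alpha$ needed for the Beltrami property. Everything else --- the pipeline machine $\to$ bijective generalized shift $\to$ Cantor map $\to$ disk diffeomorphism $\to$ Reeb flow --- is the composition of Proposition~\ref{prop:areaShift} and Theorem~\ref{thm:ReebDisk}, while the invariance of Turing completeness under reparametrizing $R$ and under passage to a first-return map is routine.
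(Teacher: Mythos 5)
Your proposal follows the same route as the paper: chain Proposition~\ref{prop:areaShift} and Theorem~\ref{thm:ReebDisk} (applied to the tight contact sphere so that $\alpha$ coincides with $\alpha_{std}$ outside a solid torus), then pass through the Etnyre--Ghrist correspondence and observe that the round metric is already an admissible metric for $\alpha_{std}$ with the Hopf field as its Beltrami field. The extra care you take over interpolating the fiber metric across $\partial\mathcal T$ is a more explicit account of what the paper dispatches with ``one can check,'' but it is the same argument, so the proposal is correct and essentially identical to the paper's proof.
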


The fact that the metric can be assumed to be the round one in the complement of an embedded solid torus needs some explanation. When applying Theorem~\ref{thm:ReebDisk}, we take as ambient manifold the standard contact sphere $(\mathbb S^3,\xi_{std})$. Then, the contact form whose Reeb field realizes a given area-preserving diffeomorphism of the disk as a Poincar\'e map can be chosen to coincide with the standard contact form $\alpha_{std}$ outside a solid torus. To conclude, one can  check that the metric associated to $\alpha$ via Theorem~\ref{thm:EGcorres} can be taken to be the round one whenever $\alpha$ coincides with $\alpha_{std}$.

\begin{rem}
The construction of a Turing complete Reeb flow in Theorem \ref{thm:ReebDisk} is obtained by choosing a particular reversible universal Turing machine and realizing its associated generalized shift as the first return map of the flow restricted to a square Cantor set on a Poincar\'e section (see Proposition \ref{prop:areaShift}). Had we chosen another reversible Turing machine (not necessarily universal),  its dynamics would have been induced in the square Cantor set via the first return map of a Reeb flow. We will use this observation in Corollary \ref{cor:undProp}.
\end{rem}

\subsection{Undecidable dynamical properties in Reeb dynamics}

In this subsection, we prove some new corollaries that follow from our construction in \cite{CMPP2}. A straightforward implication of Theorem~\ref{thm:ReebDisk} is the existence of certain phenomena of contact dynamics that are undecidable. Specifically, there is no algorithm to assure that a Reeb trajectory will pass through a certain region of space in finite time. The precise formulation of this result is the following:

\begin{cor}\label{cor:undecidableOrbits}
Let $R$ be a Turing complete Reeb flow on $(M,\xi)$. Then there exist an explicitly constructible compact set of points $K\subset M$ and an explicit open set $U\subset M$ such that it is undecidable to determine if the (positive) integral curves of $R$ through the points in $K$ will intersect the set $U$.
\end{cor}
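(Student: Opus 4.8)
The plan is to leverage the Turing completeness of $R$ together with the undecidability of the halting problem. Fix once and for all a universal Turing machine $T_{\mathrm{univ}}$; since the halting problem is undecidable, there is no algorithm that, given an input tape $t$, decides whether $T_{\mathrm{univ}}$ halts on $t$. By Definition~\ref{TC} (applied with $k=0$ and, say, with the simplification of the third Remark following Definition~\ref{TC}, so that we need not track an output string), for each input tape $t$ there is an explicitly constructible point $p_t\in M$ and a fixed open set $U\subset M$ such that the positive trajectory of $R$ through $p_t$ intersects $U$ if and only if $T_{\mathrm{univ}}$ halts on input $t$. The open set $U$ here is the explicit open set in the statement; it does not depend on $t$.

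Next I would build the compact set $K$. The subtlety is that the statement asks for a single \emph{explicitly constructible compact set} $K$, rather than a countable family of points. The natural choice is to take $K$ to be the closure of the set $\{p_t : t\in\Sigma^{\mathbb Z}_{\mathrm{fin}}\}$, where $\Sigma^{\mathbb Z}_{\mathrm{fin}}$ denotes the (countable) set of input tapes that are eventually blank — equivalently, by the Remark after the description of the global transition function, these are exactly the legitimate finite inputs. In the construction underlying Theorem~\ref{thm:ReebDisk}, the point $p_t$ is obtained from the point assignment sending a sequence to a point of the square Cantor set $C^2$ inside a Poincaré section $D\subset M$, composed with an explicit (algorithmic) identification of $D$ with an embedded disk in $M$; hence $K$ is a subset of this embedded square Cantor set, and in particular $K$ is contained in a compact set. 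One then checks that $K$ is itself closed: the point assignment is continuous in the product topology, so the closure of the image of the finite sequences is contained in the image of the full shift space $C^2$, which is compact; taking $K$ to be this closure (an explicitly described Cantor-type set) makes $K$ compact and explicitly constructible. The decision problem in the statement then reads: given a point $q\in K$ of the form $p_t$, decide whether the positive orbit of $R$ through $q$ meets $U$.

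Finally I would argue the undecidability. Suppose, for contradiction, that there were an algorithm $\mathcal A$ which, on input a point of $K$ (presented, say, by a finite description of the corresponding input tape $t$, which is part of the ``explicitly constructible'' data), decides whether the positive $R$-orbit through that point intersects $U$. Composing $\mathcal A$ with the (algorithmic) map $t\mapsto p_t$ from finite input tapes to points of $K$ yields an algorithm that, given a finite input tape $t$, decides whether the orbit of $R$ through $p_t$ meets $U$, which by the Turing-completeness property is exactly deciding whether $T_{\mathrm{univ}}$ halts on $t$. This contradicts the undecidability of the halting problem. Therefore no such algorithm exists, which is the assertion of Corollary~\ref{cor:undecidableOrbits}.

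The main obstacle I anticipate is not the logical core — that is a direct reduction from the halting problem — but the bookkeeping needed to justify that $K$ is genuinely a \emph{single} compact, explicitly constructible set rather than a family of points, and that restricting the decision problem to inputs of the special form $p_t$ (with $t$ finite) still captures an undecidable problem. Concretely, one must make sure that membership of a point in $K$, and recovery of the associated tape data from a point of $K$, are themselves effective, so that an alleged decision procedure for orbits through $K$ really does yield a decision procedure for halting; this is where the explicitness of the point assignment into the square Cantor set (and of the embedding of the Poincaré section into $M$) from \cite{CMPP2} is used in an essential way.
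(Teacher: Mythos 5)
Your proof is correct and is precisely the ``straightforward implication'' the paper has in mind: the paper does not actually spell out an argument for Corollary~\ref{cor:undecidableOrbits}, treating it as an immediate consequence of Theorem~\ref{thm:ReebDisk} together with the undecidability of the halting problem, and your reduction is exactly that. Two small remarks on the bookkeeping you flagged. First, rather than defining $K$ as the closure of $\{p_t\}$ and then arguing that this closure is compact and explicitly describable, it is cleaner to take $K$ directly to be the embedded image of the square Cantor set $C^2$ in the Poincar\'e section (or the sub-block of $C^2$ corresponding to configurations with initial state $q_0$, which is where all the $p_t$ live); this set is manifestly compact and explicitly constructible, it contains every $p_t$, and you avoid having to characterize a closure. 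Indeed this is consistent with the paper's own remark after Corollary~\ref{cor:undProp}, where $K$ is taken to be built from blocks of the square Cantor set. Second, your reading of the corollary is the right one: ``undecidable to determine if the integral curves of $R$ through the points in $K$ intersect $U$'' means that there is no algorithm which, given a point of $K$ presented by its defining data (the input tape $t$), decides whether the forward orbit through that point meets $U$; restricting attention to the recursive subfamily $\{p_t\}\subset K$ suffices to establish this, exactly as you argue.
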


A variation of our construction also allows us to construct a Reeb field $R$ for which there exist explicit points on $M$ such that the problem of determining if the orbit of $R$ through each of these points is closed is undecidable. The use of generalized shifts to find orbits whose periodicity is undecidable was also discussed by Moore in \cite{Mo}.

The aforementioned result follows from this auxiliary lemma.
\begin{lem}\label{keylemma}
There exists a Turing machine $T'$ such that:
\begin{enumerate}
\item It is reversible.
\item The image of the first component of the transition function $\delta$ does not contain $q_0$.
\item It satisfies the ``restart" property: if $T'$ halts with input $(q_0,t)$, then it halts with output $(q_{halt},t)$.
\item $T'$ is universal in the following sense: the halting of any Turing machine $T$ and input $c_0$ is equivalent to the halting of $T'$ for some explicit input (which depends on $T$ and $c_0$).
\end{enumerate}
\end{lem}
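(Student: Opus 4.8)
The plan is to build $T'$ by reversibly simulating a universal Turing machine in a way that restores the input tape whenever it halts, and then by prefixing a dedicated initial state that is never re-entered. Fix, once and for all, a universal Turing machine $U$: by definition the halting of an arbitrary pair $(T,c_0)$ is equivalent, through an explicit computable encoding $t=\mathrm{enc}(T,c_0)$, to the halting of $U$ on input $t$, so property~(4) will follow as soon as we arrange that $T'$ halts on $t$ if and only if $U$ does. It is convenient to work with a two-track tape: the first (``work'') track carries the simulated tape of $U$ and is initialised with $t$, while the second (``history'') track is initially blank, so that the configuration ``work track $=t$, history track blank'' is literally the tape $t$.

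\emph{Step 1: forward simulation with history.} Let the dedicated initial state $q_0$ of $T'$ have a single transition, altering nothing on the tape, that launches a Bennett-style reversible simulation of $U$ on the work track. At each simulated step of $U$, besides performing the corresponding move, $T'$ appends to the history track a record of the step --- the state just departed from, the symbol just overwritten, and the head displacement --- that is, exactly enough information to invert that step later. This is the classical reversibilisation of an arbitrary Turing machine: the resulting step map on pairs (configuration, history) is injective.

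\emph{Step 2: uncomputation and halting.} When the simulated machine reaches the halting state of $U$, $T'$ enters a reverse phase: it reads off the last record of the history track, uses it to undo one step of $U$ on the work track, erases that record, and repeats until the history track is empty. Being literally the inverse of the forward phase, the reverse phase terminates after exactly as many steps as the forward phase took, leaving the work track equal to $t$ and the history track blank; $T'$ then passes to $q_{halt}$. Hence $T'$ halts on $t$ precisely when $U$ does, and when it halts its output tape is $t$ --- this yields properties~(3) and~(4). Reversibility, property~(1), holds because each of the two phases is reversible and the three ``seam'' transitions --- the launch, the passage from the forward to the reverse phase at the halting configuration of $U$, and the passage from the reverse phase to $q_{halt}$ at the empty-history configuration --- can be arranged, with the help of a few fresh auxiliary states, so that every configuration has a unique predecessor; here it is essential that ``the history track is empty'' is a locally checkable condition singling out the genuine start of the computation, so that the reverse phase does not stop prematurely even if $U$ happens to re-enter its own initial state during the run. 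Finally property~(2) is built in: by construction $q_0$ is entered only at the very first step and is the target of no transition of $T'$.

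\emph{Where the difficulty lies.} No step is conceptually deep, but the point demanding genuine care is the verification of global reversibility of $T'$, that is, backward-determinism of its transition function across the junctions between the launch, forward, reverse, and halting phases. This is precisely the sort of bookkeeping --- allocating fresh states so that seam transitions have unique preimages, and using the empty-history marker to distinguish the true start from spurious re-visits of $U$'s initial state --- that underlies the standard constructions of reversible and reversible universal Turing machines, and it is where the technical weight of the proof is concentrated.
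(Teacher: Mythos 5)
Your proof is correct and rests on the same high-level strategy as the paper's: run a computation forward, then uncompute it to restore the input tape, then halt, while arranging backward-determinism at the seams. The realisation is genuinely different, though. The paper starts from a reversible universal Turing machine $T$ already satisfying property~(2) (citing Morita for its existence), duplicates the non-initial states with a $\{+1,-1\}$ direction flag, runs $T$ forward on the $+$ copy until $q_{halt}^+$, then runs the explicit inverse machine $T^{-1}$ on the $-$ copy until it would produce $q_0$, at which moment it redirects to $q_{halt}$; no history tape is needed because $T^{-1}$ is available directly, and property~(2) of $T$ is what makes ``reaching $q_0$ under $T^{-1}$'' an unambiguous termination signal. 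You instead start from an \emph{arbitrary} universal machine $U$ and manufacture reversibility from scratch via Bennett's history-track trick: the forward phase records an invertibility certificate, the reverse phase consumes it, and the empty-history condition plays the role the paper delegates to property~(2) of $T$ (namely, distinguishing the true start from spurious revisits of $U$'s initial state). Your version is more self-contained --- it does not presuppose reversible universal machines as a black box --- at the price of managing a multi-track alphabet and a larger bookkeeping burden at the three seam transitions; the paper's version is shorter and tape-lean once Morita's input is granted. Both are valid proofs of the lemma, and you correctly isolate global backward-determinism as the place where the technical weight sits.
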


We are now ready to prove the undecidability of determining whether a trajectory is periodic or not:
\begin{cor}\label{cor:orbit}
Let $(M,\xi)$ be a three-dimensional contact manifold. Then there is a contact form $\alpha$ defining $\xi$ whose associated Reeb field $R$ satisfies that there are explicit points on $M$ for which determining whether the orbit through one of those points is periodic or not is an undecidable problem.
\end{cor}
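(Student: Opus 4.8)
The plan is to realize, via Theorem~\ref{thm:ReebDisk}, a Reeb flow whose first-return map on a Poincar\'e section simulates the reversible machine $T'$ provided by Lemma~\ref{keylemma}, engineered so that the Reeb orbit through a distinguished point closes up \emph{exactly} when $T'$ halts. Write $\Delta\colon (Q\setminus\{q_{halt}\})\times\Sigma^{\mathbb Z}\to\mathcal P$ for the global transition function of $T'$ and $\varphi\colon\mathcal P\hookrightarrow A^{\mathbb Z}$ for the conjugacy onto (part of) a generalized shift. The first step is to extend this to a generalized shift $\phi\colon A^{\mathbb Z}\to A^{\mathbb Z}$ defined on \emph{all} configurations: on non-halting configurations $\phi$ performs one step of $T'$, and on every halting configuration $(q_{halt},t)$ it returns $(q_0,t)$. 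This extra ``restart'' rule reads only the finitely many positions carrying the current state and the symbol $t_0$, so $\phi$ is still a generalized shift; and a routine check, using reversibility (property~(1)) together with property~(2) — no transition of $\delta$ outputs the state $q_0$, so the preimage under $\phi$ of the slice $\{q_0\}\times\Sigma^{\mathbb Z}$ consists precisely of the halting configurations, on which $\phi$ is manifestly injective — shows $\phi$ is bijective.

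Feeding $\phi$ into Proposition~\ref{prop:areaShift} yields an area-preserving diffeomorphism $\psi\colon D\to D$, equal to the identity near $\partial D$, whose restriction to the square Cantor set $C^2$ is conjugate to $\phi$; Theorem~\ref{thm:ReebDisk} then produces a contact form $\alpha$ defining the given $\xi$ whose Reeb field $R$ admits a Poincar\'e section with first-return map conjugate to $\psi$ (and, through Theorem~\ref{thm:EGcorres}, the same dynamics is realized by a Beltrami field). I would then pin down the distinguished points: to an instance $(T,c_0)$ of the halting problem, property~(4) attaches an explicit input $t=t(T,c_0)$ for $T'$; I take the configuration $c=(q_0,t)$, its image $\varphi(c)\in A^{\mathbb Z}$, the point of $C^2$ assigned to $\varphi(c)$ by the base-$3$ point assignment, and finally the corresponding point $p=p(T,c_0)$ of the Poincar\'e section of $R$. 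Every step here is algorithmic, so $K:=\{\,p(T,c_0)\,\}$ is an explicitly constructible set of points of $M$.

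The heart of the proof is the chain of equivalences: \emph{the orbit of $R$ through $p(T,c_0)$ is periodic} $\iff$ \emph{$T'$ halts on $(q_0,t)$} $\iff$ \emph{$T$ halts on $c_0$}, the last equivalence being property~(4). For the first equivalence, note that $C^2$ is $\psi$-invariant and $\psi|_{C^2}$ is conjugate to $\phi$, so the Reeb orbit through a point of $C^2$ is closed iff that point is periodic for $\psi|_{C^2}$; it therefore suffices to decide when $\varphi(c)$ is periodic for $\phi$. If $T'$ halts on $(q_0,t)$, then by the restart property~(3) the $\phi$-orbit of $\varphi(c)$ visits $\varphi(q_0,t),\varphi(\Delta(q_0,t)),\dots,\varphi(q_{halt},t)$ and then returns to $\varphi(q_0,t)$, so $\varphi(c)$ is periodic. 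If $T'$ does not halt, then $\phi^k(\varphi(c))=\varphi(\Delta^k(c))$ for every $k\ge1$, and by property~(2) the state of $\Delta^k(c)$ is never $q_0$; since $c$ has state $q_0$ and $\varphi$ is injective, $\phi^k(\varphi(c))\neq\varphi(c)$ for all $k\ge1$, hence (as $\phi$ is bijective) the orbit of $\varphi(c)$ is infinite and $p$ lies on a non-closed Reeb orbit. Undecidability is then immediate: an algorithm deciding periodicity for points of $K$ would decide the halting problem.

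The step I expect to be the main obstacle is making the ``restart'' extension of the generalized shift fully rigorous — verifying that adjoining the rule $(q_{halt},t)\mapsto(q_0,t)$ keeps the map (i) a genuine generalized shift depending on a finite window, (ii) bijective, so that Proposition~\ref{prop:areaShift} applies, and that this is consistent with the way the conjugacy $\varphi$ is set up in~\cite{CMPP2}. Here properties~(1)--(3) of Lemma~\ref{keylemma} do precisely the right work: (1) gives reversibility, (3) makes the halting configuration feed back to the initial one, and (2) forbids a non-halting computation from ever returning to state $q_0$ — so it simultaneously rules out spurious periodic non-halting orbits and secures the ``does not halt $\Rightarrow$ not closed'' direction. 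A secondary, more routine point is the two-way passage between a periodic point of the first-return map and a closed orbit of the Reeb flow, for which one uses that on $C^2$ the Poincar\'e section is transverse and the return map is everywhere defined.
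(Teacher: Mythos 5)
Your proof takes essentially the same route as the paper's: start from the machine of Lemma~\ref{keylemma}, adjoin the restart rule $\delta(q_{halt},t)=(q_0,t,0)$, observe that the resulting generalized shift is bijective by reversibility, feed it through Proposition~\ref{prop:areaShift} and Theorem~\ref{thm:ReebDisk}, and read off the equivalence ``orbit through $\varphi(q_0,t)$ periodic $\iff$ $T'$ halts on $(q_0,t)$.'' The only difference is that you spell out the two directions of that equivalence (in particular using property~(2) to rule out spurious periodicity on non-halting inputs) and the bijectivity check, which the paper states without detail; this is a faithful expansion, not a different argument.
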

\begin{proof}
Let $T=(Q,q_0,q_{halt},\Sigma,\delta)$ be a universal Turing machine as in Lemma \ref{keylemma}. We extend the transition function via $\delta(q_{halt},t)=(q_0,t,0)$, and construct a generalized shift $\phi$ conjugate to $T$ by a map $\varphi$. Then given any input $(q_0,t)$, the orbit of $\phi$ through $\varphi(q_0,t)$ is periodic if and only if $T$ halts with input $(q_0,t)$.

The map $\phi$ is bijective (since $T$ is reversible), and by Proposition \ref{prop:areaShift} we can find an area-preserving diffeomorphism of the disk $F:D \rightarrow D$ (which is the identity in a neighborhood of the boundary) whose restriction to the square Cantor set is conjugate to $\phi$. Using Proposition \ref{thm:ReebDisk}, we construct a contact form $\alpha$ defining $\xi$ whose Reeb flow has a cross-section with a first return map that is conjugate to $F$. It is then obvious that the orbit of the Reeb flow through a point representing an input of the Turing machine is periodic if and only if $T$ halts with such an input. The result then follows from the undecidability of the halting problem.
\end{proof}

Other special orbits can be constructed using the fact that the Turing machine is universal. For example, it is possible to construct an explicit point $p$ such that the orbit of the Reeb flow through $p$ is closed if and only if there is a counterexample to the Riemann hypothesis (using a discrete equivalent formulation \cite{T1}), and similarly with many other open problems in mathematics. This is achieved by constructing an initial input associated to a Turing machine which halts upon finding a counterexample.

Let us now give a proof of the auxiliary Lemma \ref{keylemma}.

\begin{proof}[Proof of Lemma \ref{keylemma}]
We construct the Turing machine with the ``restart" property by formalizing the construction sketched in \cite[Theorem 8]{KO}, even though there are several ways to construct it (another one is depicted in \cite[p 220]{Mo}). As explained in \cite[Section 6.1.2]{Mor}, we can find a reversible universal Turing machine $T=(Q,q_0,q_{halt},\Sigma,\delta)$ which satisfies property 2: the initial state cannot be reached from any other state. Let us construct a universal Turing machine $T'$ starting from $T$, which satisfies 1, 2 and 3.

This Turing machine is of the form $T'=(Q',q_0,q_{halt},\Sigma,\delta')$. The space of states $Q'$ is given by
$$Q'=(Q_0\times \{-1,+1\})\cup \{ q_0,q_{halt}\}\,,$$
where $Q_0:=Q\setminus\{q_0\}$. We basically take two copies of each state in $Q$ except for $q_0$, and add $q_0,q_{halt}$. The sign in $\{-1,+1\}$ denotes the ``direction" of the computation, a concept that will become clear in the construction. To simplify, for any state $q\in Q\setminus \{q_0,q_{halt}\}$, we denote by $q_+=q\times \{+1\} \in Q'$ and $q_-=q\times \{-1\} \in Q'$. The halting state of $T'$ is $q_{halt}$, even if there are two additional states $q_{halt}\times \{1\}$ and $q_{halt}\times \{-1\}$ that we denote by $q_{halt}^+$ and $q_{halt}^-$.

For any input of $T$, given by $(q_0,t)$, we associate the input $(q_0,t)$ of $T'$. For any pair of the form $(q_+,t)$ with $q\in Q\setminus \{q_0,q_{halt}\}$, we define the transition function of $T'$ exactly as the transition function $\delta$. To formalize this, we introduce the notation $(\tilde q, \tilde t_0, \varepsilon)=\delta(q,t_0)$. Then
$$ \delta'(q_+,t_0):=(\tilde q_+,\tilde t_0, \varepsilon). $$
This is always well defined since $\tilde q$ is never equal to $q_0$. Similarly, for the initial state $q_0$ we also use the notation $(\tilde q, \tilde t_0, \varepsilon)=\delta(q_0,t_0)$ and we define
$$ \delta'(q_0,t_0):=(\tilde q_+,\tilde t_0, \varepsilon). $$
When the machine reaches the state $q_{halt}^+$ (which happens when $T$ halts with that input), we reverse the computation by defining
\begin{equation}\label{eq:negative}
\delta'(q_{halt}^+,t_0):=(q_{halt}^-,t_0,0).
\end{equation}

The idea now is that instead of halting with the output of $T$, we swapped to a ``reverse the computations" phase to undo the computations. For the states $q_{halt}^-$ and $q_-$ with $q\not\in \{q_0,q_{halt}\}$, we define $T'$ as the inverse Turing machine: a step of $T'$ for a pair the form $(q_-,t_0)$ is given by $T^{-1}$. See for instance \cite[Section 5.1.4]{Mor} for the construction of the inverse machine $T^{-1}$, which is also reversible. Denote by $\delta^{-1}$ the transition function of $T^{-1}$; notice that $\delta^{-1}$ is not defined on the state $q_0$ by property 2. Then, for $q\in Q\setminus \{q_0,q_{halt}\}$, if we set $\delta^{-1}(q,t_0)=(\tilde q, \tilde t_0,\varepsilon)$, we define
$$ \delta'(q_-,t_0):= (\tilde q_{-},\tilde t_0, \varepsilon), \text{ if } \tilde q\neq q_0. $$
If $\delta^{-1}(q,t_0)=(q_0,\tilde t_0,\varepsilon)$, it means that we have returned to the input configuration so we can define instead:
\begin{equation}
 \delta'(q_-,t_0):=(q_{halt},\tilde t_0,\varepsilon)\,.
\end{equation}
Similarly for $q_{halt}^-$, if $\delta^{-1}(q_{halt},t_0)=(\tilde q, \tilde t_0,\varepsilon)$, we define
$$ \delta'(q_{halt}^-,t_0)=(\tilde q_-, \tilde t_0,\varepsilon) \text{ if } \tilde q \neq q_0 $$
and if $\tilde q=q_0$ then
\begin{equation}\label{eq:inithalt}
\delta'(q_{halt}^-,t_0)=(q_{halt}, \tilde t_0,\varepsilon).
\end{equation}
Notice that the image state $\tilde q$ via $\delta^{-1}$ cannot be $q_{halt}$ because the transition function $\delta$ is not defined when $q=q_{halt}$.

The global transition function of $T'$ on configurations with states $q_0, q_+$ coincides with the global transition function of $T$, where $q_{halt}^+$ is identified with the halting state of $T$. Accordingly, it is injective there. Similarly, the global transition function on configurations with states $q_-$ and $q_{halt}$ coincides with that of $T^{-1}$, where $q_{halt}$ is identified with the halting state of $T'$ and $q_{halt}^-$ is identified with the initial state of $T'$. So it is also injective there. Each configuration with state $q_{halt}^+$ is sent to the same configuration with state $q_{halt}^-$ in a trivial injective way. Summarizing, the global transition function of $T'$ is injective everywhere so $T'$ is reversible\\

The machine $T'$ satisfies 2, since $q_0$ cannot be reached from $\delta$, and in our construction we attain $q_{halt}$ instead of $q_0$ when $\delta^{-1}$ is applied according to equation \eqref{eq:inithalt}. The machine is universal since its halting is equivalent to the halting of $T$. Indeed, observe that the states of the form $q_-$ in $T'$ can only be reached if $T$ halted, and $q_{halt}$ can only be reached through negative states. This shows that if $T$ does not halt with input $(q_0,t)$ then $T'$ does not halt. On the other hand if $T$ halts, $T'$ will eventually reach a negative state, reverse the computation and reach $q_{halt}$. In fact, $T$ halts with input $(q_0,t)$ if and only if $T'$ halts with the same input. This shows that $T'$ is universal.

Property $3$ is also satisfied by construction. Whenever $T'$ halts with input $(q_0,t)$, it will reach a $q_{halt}^+$, then $q_{halt}^-$ and reverse the computation to halt with configuration $(q_{halt},t)$.
\end{proof}

\begin{rem}\label{R:restart}
Since any Turing machine can be simulated by a reversible Turing machine that satisfies property $2$ (see e.g.~\cite[Section 6.1.2]{Mor}), the construction presented in the proof of Lemma~\ref{keylemma} allows one to start from any reversible Turing machine $T$, obtaining a reversible Turing machine $T'$ which halts on the same inputs than $T$ and has the ``restart" property. In particular, any undecidable property associated to the inputs of $T$ that halt is inherited by the inputs of $T'$.
\end{rem}

Finally, we can mention a corollary which serves as a sample of dynamical properties of Reeb flows which simulate Turing machines that can be easily shown to be undecidable. Such undecidable properties are inherent to Turing machines and their associated generalized shifts \cite[Theorem 10]{Mo}. A key ingredient is Rice's theorem in computability theory, which in particular shows that non-trivial questions about the set of inputs for which the Turing machine halts are undecidable~\cite{Ri}. The following result is then a straightforward consequence of the previous Remark, the Remark after Corollary \ref{cor:turing}, and the existence of reversible Turing machines for which determining if the set of inputs that halt is dense (in the set of all inputs), has cardinality at least $k\geq 0$, etc. is undecidable.

\begin{cor}\label{cor:undProp}
Let $(M,\xi)$ be a three-dimensional contact manifold. Then there is a contact form $\alpha$ defining $\xi$ and a compact set $K\subset M$ invariant by the associated Reeb field $R$ for which the following questions on the dynamics of $R$ are undecidable (we remark that $\alpha$ depends on each question):
\begin{itemize}
\item Are there at least $k\geq 0$ periodic orbits on $K$?
\item Is the set of periodic orbits dense in $K$?
\item For a given $\mu>0$, is the set of periodic orbits on $K$ of measure greater than $\mu$?
\end{itemize}
\end{cor}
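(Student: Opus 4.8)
The plan is to turn each of the three questions into the corresponding \emph{undecidable} question about the set $W_T$ of inputs on which a Turing machine $T$ halts, and then transport the undecidability through the constructions of this section. Fix one of the bullets and choose a reversible Turing machine $T$ satisfying property~$2$ of Lemma~\ref{keylemma} whose halting set has the required undecidable feature (cardinality $\geq k$, density in $\Sigma^{\mathbb Z}$, or product measure $>\mu$). Each of these is a non-trivial property of the semidecidable set $W_T$ and is therefore undecidable as $T$ ranges over a uniformly constructible family: concretely, a machine $T$ that ignores its input and simulates a fixed $T_0$ on a fixed input, halting exactly when that simulation does, has $W_T=\Sigma^{\mathbb Z}$ or $\varnothing$ according to whether $T_0$ halts, and so reduces the halting problem to all three features at once (Rice's theorem \cite{Ri} gives the same conclusion abstractly; see \cite[Theorem~10]{Mo}); the reversibilization of \cite[Section~6.1.2]{Mor} then makes $T$ reversible with property~$2$ without altering $W_T$. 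This is the sense in which the contact form $\alpha$ produced below ``depends on each question''.

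Starting from such a $T$, I run it through the restart construction of Lemma~\ref{keylemma} and Remark~\ref{R:restart}, obtaining a reversible $T'$ with the restart property and $W_{T'}=W_T$, and then extend its transition function by $\delta'(q_{halt},t_0):=(q_0,t_0,0)$ exactly as in the proof of Corollary~\ref{cor:orbit}. With this extension $\Delta$ is a bijection of the configuration space $\mathcal P=Q'\times\Sigma^{\mathbb Z}$ and a configuration $(q_0,t)$ lies on a periodic $\Delta$-orbit iff $T'$, equivalently $T$, halts on $(q_0,t)$; moreover, if $T$ is taken to have no non-trivial periodic configurations (a standard further normalization), then the signed auxiliary states of the restart construction create no cycles — no transition re-enters a $q_+$ state without first reaching $q_{halt}$ — so \emph{every} periodic $\Delta$-orbit passes through a halting input and the periodic orbits of $\Delta$ biject with $W_T$. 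Let $\phi$ be the bijective generalized shift conjugate to $\Delta$ via an encoding $\varphi\colon\mathcal P\hookrightarrow A^{\mathbb Z}$, realized as a map of the square Cantor set $C^2$; by Proposition~\ref{prop:areaShift} extend it to an area-preserving $F\colon D\to D$, the identity near $\partial D$, with $F|_{C^2}$ conjugate to $\phi$; and by Theorem~\ref{thm:ReebDisk}, applied on the given contact manifold $(M,\xi)$ as in the Remark after Corollary~\ref{cor:turing}, produce a defining contact form $\alpha$ whose Reeb flow $R$ admits a Poincar\'e section with first return map conjugate to $F$. Finally take $K$ to be the closure in $M$ of the union of the $R$-orbits through $\varphi(\mathcal P)\cap C^2$ (or, for the finer variants, through the image of the input configurations $\{q_0\}\times\Sigma^{\mathbb Z}$): this is a compact $R$-invariant set, carrying the finite measure obtained by suspending the Bernoulli measure transported from $\Sigma^{\mathbb Z}$. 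Periodic $R$-orbits inside $K$ correspond bijectively to periodic configurations of $\Delta$, hence to elements of $W_T$, and — with $K$ chosen as above — the density and the measure of the periodic set inside $K$ match, up to the suspension bookkeeping, the density and product measure of $W_T$ in $\Sigma^{\mathbb Z}$. Thus the three questions about $R|_K$ coincide with the three undecidable questions about $W_T$, and since the passage from $T$ to $\alpha$ is algorithmic, undecidability is preserved.

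The main obstacle is exactly this faithful dictionary between periodic orbits of $R$ in $K$ and $W_T$, and the care it forces in three places. First, since $\varphi$ is only an embedding, $C^2$ contains points that do not encode configurations; one avoids their contamination by restricting $K$ to the saturation of the compact invariant set $\varphi(\mathcal P)$ (and, for the ``density in $K$'' statement, further to the closure of the saturation of the input configurations, so that the interesting part of $K$ is genuinely approximated by periodic orbits precisely when $T$ halts). Second, one must rule out spurious periodic orbits inside $\mathcal P$: this is where the restart property, property~$2$, and the aperiodicity of $T$ are used, forcing every periodic $\Delta$-orbit to revisit $q_0$ and thereby reducing periodicity to halting. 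Third, for the density and measure bullets one must fix once and for all the topology and measure on $K$ transported from the Bernoulli structure on $\Sigma^{\mathbb Z}$ through $\varphi$, $F$ and the suspension, and check that density and positive measure of the periodic set survive these identifications — routine, but not optional, since a careless choice (for instance Lebesgue measure on the full disk $D$, which sees the large identity region near $\partial D$) would change the answers. None of this requires ideas beyond Moore's analysis of the analogous questions for generalized shifts \cite[Theorem~10]{Mo}, but it is the part that has to be carried out with precision.
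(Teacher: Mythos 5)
Your proposal is correct and follows essentially the same route as the paper: reduce each bullet to an undecidable non-trivial property of the halting set $W_T$ (via Rice's theorem / Moore's Theorem 10, or the explicit constant-or-empty $W_T$ reduction you give), feed a suitable reversible machine with property~2 through the restart construction of Lemma~\ref{keylemma} and Remark~\ref{R:restart} together with the extension $\delta'(q_{halt},t_0)=(q_0,t_0,0)$ so that periodic orbits through input configurations biject with halting inputs, and then push this dictionary through Proposition~\ref{prop:areaShift} and Theorem~\ref{thm:ReebDisk} with $K$ the $R$-saturation of the encoded input configurations. The paper leaves all of these steps implicit in a single paragraph; your write-up actually supplies the missing bookkeeping — notably the need to saturate only the input configurations so as to exclude spurious periodic orbits, and the need to fix the measure on $K$ as a suspension of the Bernoulli measure rather than ambient Lebesgue (which would vanish on $K$). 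The one superfluous hypothesis is the extra normalization that $T$ ``have no non-trivial periodic configurations'': once $K$ is defined as the saturation of $\{q_0\}\times\Sigma^{\mathbb Z}$, property~2 together with the restart construction already forces every periodic orbit in $K$ to pass through a halting input, so no further aperiodicity assumption on $T$ is required.
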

In the previous corollary, the set $K$ is simply the union of orbits which intersect the points associated to inputs (these points lie on a finite union of blocks of the square Cantor set, see \cite{CMPP2}).


\section{Time dependent solutions of Euler and Navier-Stokes}\label{S.future}

In the previous sections we have focused on stationary solutions to the Euler equations, first in high dimensions as a consequence of a new $h$-principle for Reeb embeddings, and then in dimension three using the power of symbolic dynamics. However, recall that the original motivation in \cite{T2,T3,TNat} was to find a Turing complete time-dependent solution. The time-dependent Euler equations on a Riemannian manifold $(M,g)$ define a dynamical system on the space of volume-preserving vector fields of the ambient manifold $\mathfrak X^\infty_{vol}(M)$. The following definition of Turing completeness is adapted to this context by analogy with Definition \ref{TC}.

\begin{defin}
Let $(M,g)$ be a Riemannian manifold. The Euler equations on $(M,g)$ are Turing complete if the following property is satisfied. For any integer $k\geq 0$, given a Turing machine $T$, an input tape $t$, and a finite string $(t_{-k}^*,...,t_k^*)$ of symbols of the alphabet, there exist an explicitly constructible vector field
$X_0\in \mathfrak X^\infty_{vol}(M)$ and a constructible open set $U\subset \mathfrak X^\infty_{vol}(M)$ such that the solution to the Euler equations with initial datum $X_0$ is smooth for all time and intersects $U$ if and only if $T$ halts with an output tape whose positions $-k,...,k$ correspond to the symbols $t_{-k}^*,...,t_k^*$.
\end{defin}

In our recent article \cite{CMP2}, we use a remarkable embedding theorem by Torres de Lizaur \cite{TdL} (building on a previous embedding theorem into time-dependent Euler flows by Tao~\cite{T2}) and the construction of Turing complete polynomial non-autonomous ODEs \cite{GCB}, to obtain Turing complete time-dependent solutions to the Euler equations:

\begin{thm}[\cite{CMP2}]\label{thm:timedependent}
There exists a (constructible) compact Riemannian manifold $(M,g)$ such that the Euler equations on $(M,g)$ are Turing complete.  In particular, the problem of determining whether a certain solution to the Euler equations with initial datum $X_0$ will reach a certain open set $U\subset\mathfrak X^\infty_{vol}(M)$ is undecidable.
\end{thm}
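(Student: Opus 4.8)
The plan is to transfer the Turing completeness known for polynomial ordinary differential equations to the infinite-dimensional dynamical system defined by the time-dependent Euler equations on $\mathfrak X^\infty_{vol}(M)$, using an embedding theorem as the bridge. First I would invoke the result of Gra\c{c}a--Campagnolo--Buescu~\cite{GCB}: there is a fixed polynomial non-autonomous ODE $\dot x = p(x,t)$ on some $\mathbb{R}^d$ which is \emph{universal}, in the sense that for every Turing machine $T$ and input $c_0$ one can algorithmically produce an initial condition $x_0=x_0(T,c_0)\in\mathbb{R}^d$ so that the (forward-complete) trajectory through $x_0$ enters a fixed open set if and only if $T$ halts on $c_0$, and moreover the relevant trajectories stay in a bounded region and the encoding $(T,c_0)\mapsto x_0$ and the accepting open set are explicit. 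Since polynomial systems reduce to quadratic ones by adjoining auxiliary variables, and a non-autonomous system suspends to an autonomous one (adding a clock variable and, if needed, compactifying the time direction so that the dynamics of interest takes place on a compact phase space), this produces a flow on a compact manifold $N$, together with an explicit point $p_0\in N$ and an explicit open set, whose reachability problem is equivalent to the halting problem.

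Next I would feed this flow into the embedding theorem of Torres de Lizaur~\cite{TdL}, which refines Tao's construction~\cite{T2}: a flow on a compact manifold embeds into the time-dependent Euler flow of a (constructible) compact Riemannian manifold $(M,g)$, in the precise sense that there is an invariant finite-dimensional locus inside $\mathfrak X^\infty_{vol}(M)$ on which the Euler evolution is conjugate to the given flow. Concretely, the manifold $M$, the metric $g$, the initial datum $X_0\in\mathfrak X^\infty_{vol}(M)$ corresponding to $p_0$, and the open set $U\subset\mathfrak X^\infty_{vol}(M)$ corresponding to the accepting set of the ODE are all produced by an explicit algorithm; none of them depends on the Turing machine, only $X_0$ does, through $p_0$. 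Because the ODE trajectory through $p_0$ is complete, the corresponding Euler solution exists and is smooth for all time, and it meets $U$ exactly when the suspended flow enters its accepting open set, that is, exactly when $T$ halts on $c_0$. An algorithm deciding whether the Euler solution with datum $X_0$ ever reaches $U$ would therefore decide the halting problem, which is impossible; this yields both the Turing completeness of the Euler equations on $(M,g)$ and the undecidability of the reachability statement.

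The delicate points are bookkeeping rather than conceptual. The first is making the whole chain effective: one must verify that the embedding of~\cite{TdL}, which passes through the Leray projection and the symmetric bilinear structure of the Euler equations together with several auxiliary geometric choices, can be carried out algorithmically, so that $(M,g)$, $X_0$ and $U$ are genuinely constructible; this is where most of the effort in~\cite{CMP2} is spent. The second is matching the hypotheses of the embedding theorem with the output of~\cite{GCB}: the polynomial ODE must be recast so that it is autonomous, relevant on a compact manifold, and of the structural type the embedding accepts, all without destroying completeness of the trajectories or the explicitness of the encoding. Once these compatibility issues are resolved, the undecidability conclusion is immediate from Turing's theorem, exactly as in the finite-dimensional statements of the previous section.
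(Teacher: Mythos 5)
Your proposal follows essentially the same route the paper sketches and that \cite{CMP2} carries out in detail: start from the Turing-complete polynomial ODEs of Gra\c{c}a--Campagnolo--Buescu \cite{GCB}, massage them into a flow on a compact manifold, feed that into the Torres de Lizaur embedding theorem \cite{TdL} (refining Tao \cite{T2}) to realize it inside the Euler evolution on $\mathfrak X^\infty_{vol}(M)$ for a constructible $(M,g)$, and conclude undecidability from the halting problem. You also correctly identify the two nontrivial bookkeeping issues --- effectiveness of the embedding and compatibility of the ODE's structural form with its hypotheses --- as the places where the real work lies.
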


This solves the question of the Turing universality of the time-dependent Euler equations in high dimensions with general Riemannian metrics.

We finish this article presenting an application of Corollary~\ref{cor:turing} in the context of the Navier-Stokes equations (following~\cite{CMPP1}). These equations describe the dynamics of an incompressible fluid flow with viscosity. On a Riemannian $3$-manifold $(M,g)$ they read as~\cite{BSM12}
\begin{equation}\label{eq:NS}
\begin{cases}
\frac{\partial}{\partial t}X+ \nabla_X X-\nu \Delta X=-\nabla p\,,\\
\operatorname{div} X=0\,, \\
X(t=0)=X_0\,,
\end{cases}
\end{equation}
where $\nu>0$ is the viscosity. In what follows, the differential operators are computed with respect to the metric $g$, and $\Delta$ stands for the Hodge Laplacian (whose action on a vector field is defined as $\Delta X:=(\Delta X^\flat)^\sharp$).

Let us analyze what happens with the solution $X(t)$ when we take the Turing complete vector field $X_0$ constructed in Corollary~\ref{cor:turing} as initial condition (for the Navier-Stokes equations with the metric $g$ that makes $X_0$ a stationary Euler flow). Specifically, using that $\operatorname{curl}_g(X_0)=X_0$, the solution to Equation~\eqref{eq:NS} with initial datum $X(t=0)=M X_0$, $M>0$ a real constant, is easily seen to be
\begin{equation}
\begin{cases}
X(\cdot,t)=MX_0(\cdot)e^{-\nu t}\,, \\
p(\cdot,t)= c_0 - \frac{1}{2}M^2e^{-2\nu t} g(X_0,X_0)\,,
\end{cases}
\end{equation}
for any constant $c_0$. The integral curves (fluid particle paths) of the non-autonomous field $X$ solve the ODE
$$\frac{dx(t)}{dt}=M e^{-\nu t}X_0(x(t))\,.$$
Accordingly, reparametrizing the time as
$$\tau(t):=\frac{M}{\nu}(1-e^{-\nu t})\,,$$
we show that the solution $x(t)$ can be written in terms of the solution $y(\tau)$ of the ODE
\[
\frac{dy(\tau)}{d\tau}=X_0(y(\tau))\,,
\]
as
\[
x(t)=y(\tau(t))\,.
\]

When $t\rightarrow \infty$ the new reparametrized ``time'' $\tau$ tends to $\frac{M}{\nu}$, and hence the integral curve $x(t)$ of the solution to the Navier-Stokes equations travels the orbit of $X_0$ just for the time interval $\tau\in [0,\frac{M}{\nu})$. In particular, the flow of the solution $X$ only simulates a finite number of steps of a given Turing machine, so we cannot deduce the Turing completeness of the Navier-Stokes equations using the vector field $MX_0$ as initial condition. More number of steps of a Turing machine can be simulated if $\nu\to 0$ (the vanishing viscosity limit) or $M\to \infty$ (the $L^2$ norm of the initial datum blows up). For example, to obtain a universal Turing simulation we can take a family $\{M_k X_0\}_{k\in\mathbb N}$ of initial data for the Navier-Stokes equations, where $M_k\rightarrow \infty$ is a sequence of positive numbers. The energy ($L^2$ norm) of this family is not uniformly bounded, so it remains as a challenging open problem to know if there exists an initial datum of finite energy that gives rise to a Turing complete solution of the Navier-Stokes equations.

\begin{acknowledgments}
Robert Cardona acknowledges financial support from the Spanish Ministry of Economy and Competitiveness, through the Mar\'ia de Maeztu Programme for Units of Excellence in R\& D (MDM-2014-0445) via an FPI grant. Robert Cardona and Eva Miranda are partially supported by the grants PID2019-103849GB-I00 / AEI / 10.13039/501100011033 and the AGAUR grant 2017SGR932. Eva Miranda is supported by the Catalan Institution for Research and Advanced Studies via an ICREA Academia Prize 2016. Daniel Peralta-Salas is supported by the grants MTM PID2019-106715GB-C21 (MICINN) and Europa Excelencia EUR2019-103821 (MCIU). This work was partially supported by the ICMAT--Severo Ochoa grant CEX2019-000904-S.
\end{acknowledgments}

\small


\begin{thebibliography}{99}


\bibitem{BSM12}
M. Arnaudon, A.B. Cruzeiro, Lagrangian Navier-Stokes diffusions on manifolds: variational principle and stability. \emph{Bull. Sci. Math.} \textbf{136} (2012) 857--881.

\bibitem{AK} V. I. Arnold, B. Khesin, \emph{Topological Methods in Hydrodynamics}. Springer, New
York (1999).

\bibitem{BEM} M. S. Borman, Y. Eliashberg, E. Murphy, Existence and classification of overtwisted contact structures in all dimensions. \emph{Acta Math.} \textbf{215} (2015) 281--361.

\bibitem{C1} R. Cardona. Steady Euler flows and Beltrami fields in high dimensions. \emph{Ergodic Theory and Dynamical Systems}, 1-24 (2020). doi:10.1017/etds.2020.124.

\bibitem{CMP1} R. Cardona, E. Miranda, D. Peralta-Salas, Euler flows and singular geometric structures. \emph{Phil. Trans. R. Soc. A.} \textbf{377}: 2019003420190034.

\bibitem{CMP2} R. Cardona, E. Miranda, D. Peralta-Salas, Turing universality of the incompressible Euler equations and a conjecture of Moore. Preprint arXiv:2104.04356 (2021).

\bibitem{CMPP1} R. Cardona, E. Miranda, D. Peralta-Salas, F. Presas, Universality of Euler flows and flexibility of Reeb embeddings. Preprint arXiv:1911.01963 (2019).

\bibitem{CMPP2} R. Cardona, E. Miranda, D. Peralta-Salas, F. Presas, Constructing Turing complete Euler flows in dimension 3. \emph{Proceedings of the National Academy of Sciences}, \textbf{118 (19)} e2026818118 (2021); DOI: 10.1073/pnas.2026818118.


\bibitem{perez}T. Cubitt, D. Perez-Garcia, M. Wolf. Undecidability of the spectral gap. \emph{ Nature} \textbf{528} (2015) 207--211.


\bibitem{hprinc} Y. Eliashberg, N. Mishachev, \emph{Introduction to the h-principle}. AMS, Providence, RI, 2002.

\bibitem{EG} J. Etnyre, R. Ghrist, Contact topology and hydrodynamics I. Beltrami fields and the Seifert
conjecture. \emph{Nonlinearity} \textbf{13} 441--458 (2000).

\bibitem{Geig} H. Geiges, \emph{Introduction to contact topology}, Cambridge Univ. Press, Cambridge, 2008.

\bibitem{GCB} D. S. Graça, M.L. Campagnolo, J. Buescu, Computability with polynomial differential equations. \emph{Adv. Appl. Math.} \textbf{40} (2008) 330--349.

\bibitem{G} M. Gromov, \emph{Partial Differential Relations}. Ergebnisse der Mathematik und ihrer Grenzgebiete 9. Springer, Berlin, 1986.

\bibitem{In} T. Inaba, Extending a vector field on a submanifold to a Reeb vector field on the whole contact manifold. \url{http://www.math.s.chiba-u.ac.jp/~inaba/} (2019).

\bibitem{KO} J. Kari, N. Ollinger, Periodicity and immortality in reversible computing. \emph{International Symposium on Mathematical Foundations of Computer Science}. Springer, Berlin, Heidelberg, 2008. p. 419-430.

\bibitem{Mo} C. Moore, Generalized shifts: unpredictability and undecidability in dynamical systems. \emph{Nonlinearity} \textbf{4} 199--230 (1991).

\bibitem{Mor} K. Morita. \emph{Theory of reversible computing}. Springer (2017) Japan.

\bibitem{penrose} R. Penrose,
\emph{The emperor's new mind.
Concerning computers, minds, and the laws of physics.} With a foreword by Martin Gardner. The Clarendon Press, Oxford University Press, New York, 1989. xiv+466 pp. ISBN: 0-19-851973-7

\bibitem{peralta} D. Peralta-Salas, Selected topics on the topology of ideal fluid flows. \emph{Int. J. Geom. Methods Mod. Phys.} \textbf{13}, suppl., 1630012, 23 pp (2016).

\bibitem{PRT} D. Peralta-Salas, A. Rechtman, F. Torres de Lizaur, A characterization of 3D Euler flows using commuting zero-flux homologies. \emph{Ergodic Theory and Dynamical Systems} \textbf{41} (2021), 2166-2181.

\bibitem{poincare}
H. Poincar\'e. \emph{Les m\'ethodes nouvelles de la m\'ecanique c\'eleste. Vol. 3.} Gauthier-Villars, 1899.

\bibitem{Ri} H. G. Rice, Classes of recursively enumerable sets and their decision problems, \emph{Transactions of the American Mathematical Society}, \textbf{74} (2) 358-366 (1953).

\bibitem{T0} T. Tao. {Finite time blowup for an averaged three-dimensional Navier-Stokes equation}. \emph{J. Amer. Math. Soc.} \textbf{29} (2016), no. 3, 601-674.

\bibitem{T1} T. Tao, On the universality of potential well dynamics. \emph{Dyn. PDE} \textbf{14} 219--238 (2017).

\bibitem{T2} T. Tao, On the universality of the incompressible Euler equation on compact manifolds. \emph{Discrete and Continuous Dynamical Systems - A} \textbf{38 (3)}, 1553--1565 (2018).

\bibitem{T3} T. Tao, On the universality of the incompressible Euler equation on compact manifolds, II. Nonrigidity of Euler flows.  \emph{Pure Appl. Function. Anal.} \textbf{5}, 1425--1443 (2020).

\bibitem{TNat} T. Tao, Searching for singularities in the Navier-Stokes equations. \emph{Nature Reviews Physics} \textbf{1} 418--419 (2019).

\bibitem{TdL} F. Torres de Lizaur, Chaos in the incompressible Euler equations on manifolds of high dimension. Preprint arXiv:2104.00647 (2021).

\bibitem{Turing} A. M. Turing, On Computable Numbers, with an Application to the Entscheidungsproblem. \emph{Proceedings of the London Mathematical Society.} \textbf{2} (1937) 42: 230-265
\end{thebibliography}
\end{document}